\makeatletter\@addtoreset {equation}{section}\makeatother
\newtheorem{theorem}{Theorem}[section]
\newtheorem{lemma}[theorem]{Lemma}
\newtheorem{definition}[theorem]{Definition}
\newenvironment{proof}{
    \noindent {\it Proof.}}{\hfill$\Box$
}
\def\eps{\varepsilon}
\def\RR{{\mathbb R}}
\def\ZZ{{\mathbb Z}}
\begin{document}

\title {Convergence to equilibrium for
a thin film equation\\ on a cylindrical surface}

\author{Almut Burchard ~~~~~~~~ Marina Chugunova\\
University of Toronto\\
{\tt \{almut,chugunom\}@math.utoronto.ca}\\
[0.5cm]
Benjamin K. Stephens\\
University of Washington\\
{\tt benstph@math.washington.edu}}

\date{November 30, 2011}

\bigskip\bigskip

\maketitle

\begin{abstract} The degenerate parabolic equation
$u_t+\partial_x [u^3\,(u_{xxx} + u_x-\sin x)]=0$ 
models the evolution of a thin 
liquid film on a stationary horizontal cylinder. 
It is shown here that for each mass there 
is a unique steady state, given by a droplet hanging from the bottom 
of the cylinder that meets the dry region with zero contact angle. 
The droplet minimizes the associated energy functional 
and attracts all strong solutions that 
satisfy certain energy and entropy inequalities,
including all positive solutions.
The distance of solutions from the steady state cannot decay faster 
than a power law. 

\medskip\noindent{\bf Keywords} Thin liquid film; coating flow; strong 
solutions; steady state; symmetrization; energy; entropy method; 
Lyapunov stability; power-law decay.

\medskip\noindent{\bf AMS Subject Classification:}
35K25; 35K35; 35Q35; 37L05; 76A20.

\end{abstract}

\section{Introduction and description of the results}

Degenerate fourth order parabolic equations of the form
\begin{equation}
\label{eq:thin-film}
u_t + \nabla \cdot (u^n \nabla \Delta u) + \mbox{lower order terms}=0
\end{equation}
are used to model the evolution of thin liquid films on solid surfaces. 
Here, $u(x,t)$ describes the thickness of the fluid at time $t$ at 
the point $x$, the fourth derivative term models the surface tension,
and the exponent $n>0$ is determined by the boundary condition between 
the liquid and the solid.  The equations were derived from the 
underlying free-boundary-value problem for the Navier-Stokes equation
by the lubrication approximation, which is valid if the film is 
relatively thin.

\begin{figure}[ht] 
\begin{center}
\includegraphics[height=4cm] {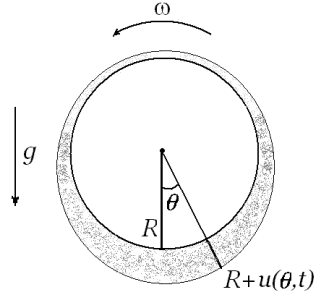}\\
\end{center}
\caption{\small Thin liquid film on the outer surface of a horizontal cylinder.}
\end{figure}

An interesting example is Pukhnachev's model for a thin liquid film on 
a rotating horizontal cylinder~\cite{Pukh1}, as shown in Figure~1. 
The model equation is 
\begin{equation} \label{eq:Pukh}
u_t  + \partial_\theta
\left[ \frac{1}{3} \frac{\sigma}{\rho \nu R^4 } u^3
\left( u_{\theta\theta\theta}+u_\theta \right) 
- \frac{1}{3} \frac{g}{\nu R} u^3 \sin \theta  + \omega u
\right] =0\,.
\end{equation} 
Here, $\theta \in \RR/(2\pi\ZZ)$ denotes the angle measured from the 
bottom of the cylinder, the film is assumed to be uniform in the axial 
direction, and inertia is ignored.  The power $n=3$ indicates a 
{\em no-slip} boundary condition between the liquid and the solid. 
The other physical parameters of the system are the surface 
tension ($\sigma$), the viscosity ($\nu$), the acceleration of 
gravity ($g$), the density of the fluid ($\rho$), the radius of the 
cylinder ($R$), and the rotation speed ($\omega$).  Note that the 
surface tension appears with an additional lower-order term that 
corrects for the curvature of the cylinder.  
Eq.~(\ref{eq:Pukh}) refines an earlier model
of Moffatt~\cite{Moffatt} that neglects surface tension.
Numerical and asymptotical analysis of 
Pukhnachev's equation along with numerous open questions 
can be found in \cite{Benilov3,Benilov1,Kar},
and linearizations about steady states are examined 
analytically and numerically in~\cite{ChKP09,John}. 

We will show that the long-time behaviour of Pukhnachev's model on a 
{\em non-rotating} cylinder is controlled by steady states. Our 
results imply that 

\begin{itemize}
\item Eq.~(\ref{eq:Pukh}) with $\omega=0$ 
has for every given mass a unique nonnegative steady state;
\item 
the steady state minimizes energy and attracts all solutions of 
finite entropy;
\item
the distance of solutions from the steady state 
decays no faster than a power law $\sim t^{-\frac{2}{3}}$.  
\end{itemize}
Note that solutions of finite entropy are almost everywhere strictly positive.
The steady state has the shape of a shallow drop hanging from the bottom 
of the cylinder, with a dry region at the top that it meets at zero 
contact angle, see Figure~2 below.  We suspect that the distance from 
the steady state actually behaves as $t^{-\frac{1}{3}}$. This 
conjecture is supported by simulations, and by analogy with aggregation 
processes such as late-stage grain growth in alloys~\cite{LifSly}
and the formation of drops at a faucet, where the growth 
of the grain or droplet is limited by the rate of mass 
transfer through a region of low
density to the region of accumulation.

Our results are motivated by the work of Carrillo and Toscani, who proved 
global convergence to self-similar solutions for the thin film equation
\begin{equation}\label{eq:basic}
u_t + \partial_x [u^n u_{xxx}]=0
\end{equation}
on the real line with $n=1$~\cite{CT}.  In contrast with the recent 
precise convergence results of Giacomelli, Kn\"upfer and Otto~\cite{GKO}, 
our conclusions are qualitative and global. We work in a large class of 
nonnegative strong solutions that contains all positive classical 
solutions, and we do not assume (and do not prove) uniqueness of the 
solutions. Our lower bounds on the distance from the steady state should 
be compared with results of Carlen and Ulusoy~\cite{CU},
who showed for Eq.~(\ref{eq:basic}) with $n=1$ that the 
distance from the self-similar solution satisfies a 
power-law {\em upper bound}. We will give a more detailed
description at the end of this section.  

Thin liquid films have been the subject of rigorous mathematical
analysis since the pioneering article of Bernis and Friedman~\cite{BF}.
A vast body of papers is dedicated to existence of solutions,
regularity, long-time behaviour, finite-time blow-up, and the
interface between wet ($u>0$) and dry $(u=0)$ regions,
see for example~\cite{BG,B2,BertPugh1996,B15,Report,Otto}
and references therein.  An even larger part of the literature
studies the properties of physically relevant solutions through 
asymptotic expansions, numerical analysis, and laboratory experiments.

A fundamental question is {\em well-posedness}: which initial values 
give rise to unique nonnegative solutions that depend continuously on
the data? The difficulty is that solutions of fourth order
parabolic equations generally do not satisfy a maximum principle, 
and linearization leads to semigroups that do not preserve positivity. 
To give  a simple example, the function $u(x,t) = 1 + t\cos(x)$, which 
develops negative values after $t=1$, solves Eq.~(\ref{eq:PDE}) 
(given below) with $n=0$ and $\alpha=1$.
Bernis and Friedman proved that initial-value problems in 
one space dimension have weak solutions in suitable function 
spaces ~\cite{BF}.  A far-reaching technical contribution 
was their use of {\em energy} and {\em entropy} functionals 
that decrease along solutions.  Still, after twenty years, uniqueness 
remains an open problem.

There are many questions surrounding {\em steady states}:
Are they uniquely determined by their mass, and if not, how many are there?
Are they strictly positive, and if not, what is the contact angle 
between wet and dry regions~\cite{LaugPugh}?  
Under what conditions are they stable, do they attract all bounded 
solutions, and what is the rate of convergence?
Since energy decreases along solutions, we expect that steady 
states should correspond to critical points of the energy, that solutions 
should converge to steady states, and that
minimizers of the energy should be asymptotically stable.
However, in the absence of a proper well-posedness
theory, the proof of these statements requires more than a 
standard application of Lyapunov's principle~(as
stated, for example, in~\cite{HS}).

One strategy for proving convergence to equilibrium is to use entropy 
in place of energy.  The basic thin film equation~(\ref{eq:basic})
has a family of entropy functionals of the form 
$ S_\beta(u)= \int_\Omega u^{-\beta}\, dx $ that decrease along 
solutions. For $\beta=n-2$, this was established by Bernis and 
Friedman, for $\beta=n-\frac{3}{2}$  it is due to Kadanoff 
(see~\cite{Kadanoff}), and the range $n-3< \beta < n-\frac{3}{2}$ was
developed independently in~\cite{B2,BertPugh1996}.
Other families of entropies have since been discovered~\cite{JM,Laug}.
In their classical papers, Beretta, Bertsch, Dal Passo and 
Bertozzi, Pugh used these entropies to show that
solutions of Eq.~(\ref{eq:basic}) with $n > 0$ 
on an interval become positive after a finite time and
converge uniformly to their mean~\cite{B2,BertPugh1996}.
Remarkably, this convergence holds for a very broad class of weak 
solutions, about which little else is known.  Several works over 
the last decade have combined energy and entropy methods by deriving 
coupled inequalities for the energy and entropy dissipation. In this 
way, Tudorascu proved exponential convergence to the mean for thin 
films on finite intervals~\cite{Tudorascu}, and 
Carrillo, Toscani~~\cite{CT} and Carlen, Ulusoy~\cite{CU}
have proved global (power-law) convergence to self-similar solutions 
of the thin equation with $n=1$ on the real line~\cite{CT,CU}. 

\begin{figure}[t]
\begin{center}
\includegraphics[height=5.5cm] {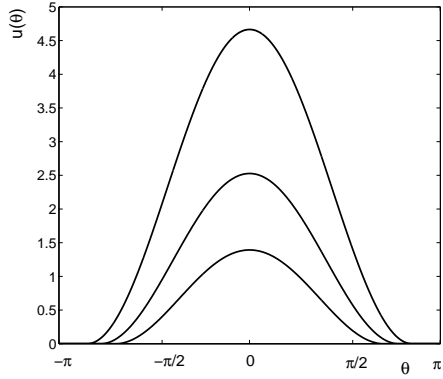}
\end{center}
\caption{\small The energy minimizing steady  state
for Eq.~(\ref{eq:PDE}) with $\alpha=1$ and initial 
data $ u_0 = 0.5, 1, 2$.  The 
minimizer is concentrated if the mass is small, and spreads out
over $(-\pi,\pi)$ as the mass increases.  
}
\end{figure}

A promising approach, first pursued by Otto~\cite{Otto}, 
treats thin-film equations as {\em gradient flows} that 
evolve by steepest descent on the space of nonnegative 
functions of a given mass, endowed with a suitable metric. 
Since Eq.~(\ref{eq:thin-film}) has the form 
$u_t = \left[u^n\left(\frac{\delta E} {\delta u}\right)_x\right]_x$,
a formal computation suggests certain variants of the Wasserstein metric.  
These metrics measure the distance between two mass densities 
as the cost of an optimal transportation plan that pushes one density 
forward to the other.
Gradient flow methods have been very successful 
for the porous medium and fast diffusion equations,
which are second-order degenerate parabolic equations
that share many features of thin films.
There also has been some progress on thin film
equations with $n=1$, where the metric is just the
standard optimal transportation distance~\cite{Otto}.
Thin-film equations with $n>1$ have proved resistant to this 
approach, because the geometry of the relevant Wasserstein 
distance with mobility is not well understood,
and the thin film energy is not geodesically convex~\cite{CLSS}.

Yet another strategy is to linearize about the steady state. 
This linearization is delicate if the steady state 
has a dry region.  Recently, Giacomelli, Kn\"upfer and Otto
have developed a technique specifically
for droplets with zero contact angles~\cite{GKO}.
They prove well-posedness and convergence to the steady state
for initial values in a singularly weighted 
Sobolev space that forces the solution to vanish to a certain 
order at the contact point. So far, these techniques have been
developed for Eq.~(\ref{eq:basic}) with $n=1$.

\bigskip
{\bf Outline of the paper.} \ We study the role of energy-minimizing 
steady states for the
dynamics of 
\begin{equation} \label{eq:PDE}
 u_t + \partial_x\left[u^n\,
(u_{xxx} + \alpha^2u_x -\sin x)\right]
= 0\,,
\quad x \in \Omega = \RR/(2\pi\ZZ)\,
\end{equation}
with $n>0 $ and $\alpha>0$.
Each of these equations describes the evolution of a thin film.
As in Eqs.~(\ref{eq:thin-film}) and~(\ref{eq:basic}), the
exponent $n$ is related to the boundary condition between 
the liquid and the solid,
the first two terms in the parentheses model surface tension, 
and the third term accounts for gravitational drainage.
The $\alpha^2 \partial_x [u^n \partial_x u]$ term is reminiscent of 
the porous-medium equation, but appears with the opposite sign,
resulting in a long-wave instability~\cite{B15}. 
The units of time and length are scaled so that the surface 
tension and gravitational terms appear with coefficient one.
The parameter $\alpha>0$ is a geometric constant,
with $\alpha=1$ for a horizontal cylinder.
Values of $\alpha>1$ appear when inertial 
effects are taken into account~\cite[Eq.~(2.3)]{Kelmanson}. 
In general, the geometric coefficient is a function that
depends on the curvature of the surface~\cite[Eqs. (64) and~(68)]{MCC}.
We are mostly interested in the case where $n=3$ and $\alpha=1$, 
which corresponds to Pukhnachev's model at zero rotation speed,
but find it illuminating to consider also 
other values of $n$ and~$\alpha$.

Many competing definitions of weak solutions
have been proposed. Following Bernis and Friedman, we
define a {\em strong solution} 
of Eq.~(\ref{eq:PDE}) on a finite time interval
$(0,T)$ to be a nonnegative function 
$u\in L^2\bigl((0,T), H^2(\Omega)\bigr)$ that satisfies
\begin{equation} \label{eq:BF-strong}
\int_0^T\int_\Omega
\Bigl\{ u\phi_t - (u_{xx} + \alpha^2\,u 
+ \cos x)\bigl(u^n\phi_x\bigr)_x
\Bigr\}\, dx dt = 0
\end{equation}
for every smooth test function $\phi$ with compact 
support in $(0,T)\times \Omega$. Such solutions 
are believed to be unique.  We will consider strong solutions
that exist for all $t>0$ and satisfy additional bounds on the 
energy and entropy (see Section~\ref{sec:conv}). 
In particular, these solutions are almost everywhere strictly positive.

Our main results concern the convergence of solutions to 
steady states. We start from the {\em energy}
for Eq.~(\ref{eq:PDE}), given by 
\begin{equation} \label{eq:def-E}
\label{EF} E(u) = \frac{1}{2} \int_\Omega
\bigl(u_x^2 - \alpha^2 u^2\bigr)\, dx 
- \int_\Omega u\,\cos x\,dx\, ,
\end{equation}
where the first integral accounts for the surface tension, and 
the second integral for the gravitational potential energy.
Formally, the energy decreases according to 
$\frac{dE(u)}{dt} = - D(u)$, where
\begin{equation}\label{eq:def-D}
D(u) = \int_{\{u>0\}} u^n \bigl(u_{xxx} +\alpha^2u_x
-\sin x\bigr)^2\, dx 
\end{equation}
is the {\em dissipation} associated with Eq.~(\ref{eq:PDE}). 

Note that the energy is not convex for $\alpha>1$,
and not bounded below in $H^1$ for $\alpha\ge 1$. Nevertheless, 
we show in Section~\ref{sec:min} that for every choice of $\alpha>0$ 
the energy has a unique minimizer
among {\em nonnegative} $H^1$-functions of a given mass $M$.
We denote this minimizer by $u^*$. 
The shape of $u^*$ (depending on $\alpha$ and the mass) is described 
precisely in Theorem~\ref{thm:min}.  
The minimization problem is complicated by the fact
that the minimizers can have dry regions,
where the Euler-Lagrange equation is weakened to an inequality. 
We combine a careful analysis of the variational inequality 
with symmetrization techniques.

In Section~\ref{sec:steady}, we show that $u^*$
is the unique steady state of Eq.~(\ref{eq:PDE}) 
with zero dissipation when $\alpha\le 1$. 
For $\alpha>1$, we find also saddle points, as well 
as a continuum of two-droplet steady states that are not 
critical points for the energy in $L^2$. Since the
definition of strong solutions forces steady states to meet any 
dry spot at a zero contact angle, Theorem~\ref{thm:min} implies in 
particular that such steady states exist 
if $M(1-\alpha^2)\le 2\pi$.  For each value of $\alpha$ and $M$, 
there is also a continuum of time-independent solutions
of Eq.~(\ref{eq:PDE}) with compact support
and positive contact angles, which are
analogous to the steady states in~\cite {LaugPugh}. Their role in
the long-term evolution of positive solutions remains open.

Section~\ref{sec:conv} contains the
main convergence result, Theorem~\ref{thm:conv}.
We show that the energy
minimizer $u^*$ is a dynamically stable, locally attractive 
steady state of Eq.~(\ref{eq:PDE}).  For $\alpha\le 1$, 
we show that all solutions that satisfy 
certain energy and entropy inequalities converge to $u^*$.
For $\alpha>1$, the 
convergence holds on a sub-level set of the energy that contains
no other strong steady states.

The entropy methods of~\cite{B2,BertPugh1996,CT,CU,Tudorascu} 
do not apply to Eq.~(\ref{eq:PDE}), because the 
entropy can increase as well as decrease 
along solutions. For steady states with dry regions,
the entropy is not even finite. We combine Lyapunov's method with a 
linear bound on the growth of the entropy to produce a sequence of 
times along which the solution converges weakly to a steady 
state, using a recent argument 
of~\cite{Cheung-Chou}. We pass to convergence 
in norm along the full solution by proving a local coercivity 
estimate on the energy near the minimizer (which becomes 
global for $\alpha\le 1$).

Finally, in Section~\ref{sec:rate},
we turn to the rate of convergence.  
We show that convergence  to the steady state 
is exponential whenever $u^*$ is strictly 
positive. This extends the results of ~\cite{B2,BertPugh1996,Tudorascu}
to examples where the steady state is non-constant
(but note that our proof uses energy in place of entropy).
When $u^*$ has a dry region and $n>\frac{3}{2}$, we show that
solutions cannot approach it faster than
a power law $\sim t^{-\frac{2}{2n-3}}$.  
In the proof, we show that
Kadanoff's entropy $S_{n-\frac{3}{2}}$ grows at most linearly.

All our results are easily adapted to the
long-wave stable case of Eq.~(\ref{eq:PDE})
where the $\alpha^2 \partial_x [u^n \partial_x u]$
term appears with the opposite sign (see~\cite{B15}).
There, the energy-minimizing steady state
is dynamically stable and 
attracts all solutions of finite entropy.
It is strictly positive and exponentially
attractive so long as $M(1+\alpha^2)>2\pi$.
Otherwise, the rate of convergence is limited
by a power-law, at least when $n> \frac{3}{2}$.
For $M(1+\alpha^2)=2\pi$, the minimizer has a
touchdown zero, and for $M(1+\alpha^2)<2\pi$, it has
the shape of a droplet with zero contact angles.

\bigskip
{\bf Acknowledgments.} \ This work
was partially supported by NSERC though a Discovery grant (A.B.,
B.S.) and a postdoctoral fellowship (M.C.). We thank 
Victor Ivrii, Mary Pugh, and Roman Taranets for 
stimulating discussions, and Rick Laugesen 
and Vladislav Vasil'evich  Pukhnachev for 
valuable comments  on an early version of the manuscript.

\section{Energy minimizers}
\label{sec:min}

In this section, we study the energy landscape 
over the space of nonnegative functions of a given mass,
$$
{\cal C}_M=\Bigl\{u\in L^2(\Omega)\mid u\ge 0, \int_\Omega u(x)\, 
dx=M
\Bigr\}\,.
$$
We use two topologies on this space, the usual $L^2$-distance
$||u-v||_2$, and the $H^1$-topology, with distance function
$$
d_{H^1}(u,v) = ||u_x-v_x||_2\,,\quad u,v\in {\cal C}_M\,.
$$ 
Since $u$ and $v$ have the same  mass, their difference
has mean zero, and this distance is equivalent to the usual $H^1$-distance. 
It is clear from Eq.~(\ref{eq:def-E}) that the energy is continuous in $H^1$.
By convention, $E(u)=\infty$ if $u\not\in H^1$.

For $\alpha<1$, $E$ is a positive definite
quadratic form, and hence strictly convex. 
This can be seen either from
the Wirtinger inequality~(see, for example,~\cite[p. 61]{Dac}), or by
writing the energy in terms of the Fourier series of $u$ as
$$
E(u)=\pi\sum_{p\in\ZZ\setminus{0}} (p^2-\alpha^2)|\hat u(p)|^2
-\alpha^2\frac{M^2}{2\pi}- \pi(\hat u(1)+\hat u(-1))\,.
$$
For $\alpha=1$, 
the energy is convex, but not strictly convex on ${\cal C}_M$,
because its Fourier expansion depends linearly 
on $\hat u(\pm 1)$.  For $\alpha>1$, 
convexity is lost. 

We will show that the energy has a unique minimizer on ${\cal C}_M$,
and describe its profile.  Our first lemma shows that a minimizer 
exists.  

\begin{lemma} [Existence of minimizers.]
\label{lem:energy-bound} 
For every $M< \infty$, $E$ attains its minimum on ${\cal C}_M$.
\end{lemma}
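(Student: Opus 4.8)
The plan is to apply the direct method of the calculus of variations. The one genuinely delicate point is that for $\alpha>1$ the quadratic form in $E$ is indefinite, so the required lower bound cannot come from convexity; instead it must come from the interplay between the nonnegativity constraint, the fixed mass, and the gradient term. Everything else is a standard compactness-plus-lower-semicontinuity argument.

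First I would show that $E$ is bounded below on ${\cal C}_M$ and coercive in the $H^1$-seminorm. The gravitational term is harmless, since $u\ge 0$ gives $|\int_\Omega u\cos x\,dx|\le\int_\Omega u\,dx=M$. To tame the indefinite term $-\frac{\alpha^2}{2}\int_\Omega u^2\,dx$ I would invoke the one-dimensional Gagliardo--Nirenberg interpolation inequality on $\Omega$,
$$
\|u\|_2^2\le C\,\|u_x\|_2^{2/3}\,\|u\|_1^{4/3}+C'\,\|u\|_1^2\,,
$$
which for $u\in{\cal C}_M$ reads $\int_\Omega u^2\,dx\le CM^{4/3}\bigl(\int_\Omega u_x^2\,dx\bigr)^{1/3}+C'M^2$. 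The nonnegativity is essential here: it is what lets me replace $\|u\|_1$ by the fixed mass $M$. Writing $G=\int_\Omega u_x^2\,dx$, this yields
$$
E(u)\ge \tfrac{1}{2}G-\tfrac{\alpha^2 C}{2}M^{4/3}G^{1/3}-\mbox{const}(\alpha,M)\,,
$$
and since the sublinear term $G^{1/3}$ is dominated by $\frac12 G$ as $G\to\infty$, the right-hand side is bounded below and tends to $+\infty$ with $G$. This establishes at once that $\inf_{{\cal C}_M}E>-\infty$ and that any minimizing sequence $\{u_n\}$ has $\|u_{n,x}\|_2$ bounded; together with the mass bound this gives a uniform $H^1$-bound.

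With coercivity in hand the rest is routine. I would take a minimizing sequence $\{u_n\}\subset{\cal C}_M$ and, using the $H^1$-bound, extract a subsequence converging weakly in $H^1$ to a limit $u$. Since $\Omega$ is compact and one-dimensional, the embedding $H^1(\Omega)\hookrightarrow L^2(\Omega)$ is compact (indeed $H^1\hookrightarrow C(\Omega)$ compactly), so the subsequence converges strongly in $L^2$ and uniformly. The constraints pass to the limit: $\int_\Omega u\,dx=\lim_n\int_\Omega u_n\,dx=M$ by $L^2$-convergence, and $u\ge 0$ a.e.\ as the almost-everywhere limit of nonnegative functions, so $u\in{\cal C}_M$.

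Finally I would verify weak lower semicontinuity of $E$. The gradient term $\frac12\int_\Omega u_x^2\,dx$ is convex and hence weakly lower semicontinuous in $H^1$, while $-\frac{\alpha^2}{2}\int_\Omega u^2\,dx$ and $-\int_\Omega u\cos x\,dx$ are continuous under strong $L^2$-convergence and therefore continuous along the subsequence. Thus $E(u)\le\liminf_n E(u_n)=\inf_{{\cal C}_M}E$, so $u$ attains the minimum. The hard part is the lower bound of the second paragraph: without the Gagliardo--Nirenberg estimate and the fixed-mass constraint there is no reason for the indefinite energy to be bounded below when $\alpha>1$, and indeed, as already noted in the text, $E$ is unbounded below on all of $H^1$ in that regime.
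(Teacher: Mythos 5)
Your proposal is correct and follows essentially the same route as the paper: the direct method, with the key point in both cases being that nonnegativity plus the fixed mass lets one control $\int_\Omega u^2\,dx$ sub-quadratically in $\|u_x\|_{L^2}$, giving coercivity, followed by the standard Rellich compactness and weak lower semicontinuity argument. The only difference is cosmetic: where you invoke Gagliardo--Nirenberg interpolation, the paper uses the elementary bounds $\int_\Omega u^2\,dx\le M\|u\|_{L^\infty}$ and $\|u\|_{L^\infty}\le \frac{M}{2\pi}+\sqrt{\pi}\,\|u_x\|_{L^2}$, which give a linear (rather than sublinear) bound in $\|u_x\|_{L^2}$ and an explicit constant, but serve exactly the same purpose.
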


\begin{proof} Let $u\in {\cal C}_M$.
Since $u$ is nonnegative and has mean $\frac{M}{2\pi}$, it satisfies
$$
\int_{\Omega} u^2 dx \le M \,\|u \|_{L^{\infty}}\,, \quad
||u||_{L^\infty} \le \frac{M}{2\pi} +
\sqrt{\pi}||u_x||_{L^2}\,.
$$
Inserting these estimates into the functional, we obtain
\begin{eqnarray} 
\nonumber E(u) 
&\ge&  \frac{1}{2}\Bigl( ||u_x||_{L^2} -
\frac{\alpha^2M\sqrt{\pi}}{2}\Bigr)^2 
- \frac{\alpha^4\pi}{8} M^2 -  \Bigl(1+\frac{\alpha^2}{4\pi}\Bigr)M\,,
\label{eq:energy-bound}
\end{eqnarray}
which shows that $E$ is bounded below on ${\cal C}_M$.

Consider a minimizing sequence $\{u_j\}_{j\ge 1}$.
By Eq.~(\ref{eq:energy-bound}), the sequence
is bounded in $H^1$. We invoke the Rellich lemma
and pass to a subsequence
(again denoted by $\{u_j\}$) that converges weakly in
$H^1$ and strongly in $L^2$ to some function $u^*$ in ${\cal C}_M$. Since $E$
is weakly lower semicontinuous on $H^1$, we have 
$$
\inf_{u\in {\cal C}_M}E(u) 
\le E(u^*)
\le \lim_{j\to\infty} E(u_j)\le 
\inf_{u\in {\cal C}_M} E(u) \,,
$$
and conclude that $E$ attains its minimum at $u^*$.
\end{proof}

\bigskip 
The Euler-Lagrange equation for $E$ under the mass constraint 
is given by \begin{equation}\label{eq:EL}
u_{xx} +\alpha^2 u +\cos x = \lambda \,,
\end{equation}
where $\lambda$ is a Lagrange multiplier. 
We need to incorporate also the positivity constraint.
If $u\in {\cal C}_M$, we decompose $\Omega$ according to the value of $u$ 
into the positivity set and the zero set of $u$, defined by
$$
P(u)=\{ x\in\Omega \mid u(x)>0\}\,,
\quad 
Z(u)=\{ x\in\Omega \mid u(x)=0\}\,.
$$

\begin{lemma}[Euler-Lagrange equation
and zero contact angle.] \label{lem:EL}
If $u^*$ minimizes $E$ on ${\cal C}_M$, then 
it solves~(\ref{eq:EL})
on $P(u^*)$. The Lagrange multiplier
is positive and satisfies 
$\lambda \ge \sup \bigl\{ \cos x\mid x\in Z(u^*)\bigr\}$.
Furthermore, $u^*$ is of class ${\cal C}^{1,1}$, and $u^*_x=0$ 
on $\partial P(u^*)$.
\end{lemma}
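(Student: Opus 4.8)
The plan is to treat the minimization as a constrained problem with an equality (mass) constraint and the inequality constraint $u\ge0$, and to exploit that $u\ge0$ is inactive on the open set $P(u^*)$ but active on $Z(u^*)$. First I would derive the Euler--Lagrange equation on $P(u^*)$ by taking two-sided, mass-preserving variations $u^*+\eps\phi$ with $\phi$ smooth, mean zero, and compactly supported in $P(u^*)$; these are admissible for $|\eps|$ small because $u^*>0$ there, so the vanishing first variation yields (\ref{eq:EL}) in the weak sense, with the multiplier $\lambda$ accounting for the mass constraint. Since $u^*\in H^1\hookrightarrow C(\Omega)$, the right-hand side $\lambda-\alpha^2u^*-\cos x$ is continuous, so ODE regularity bootstraps $u^*$ to a classical, indeed analytic, solution of (\ref{eq:EL}) on each component of $P(u^*)$. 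Because a nontrivial analytic solution has only isolated zeros, $P(u^*)$ has finitely many components, $\partial P(u^*)$ is finite, and $u^*_{xx}=\lambda-\alpha^2u^*-\cos x$ is bounded on $P(u^*)$.

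Next I would capture the effect of the positivity constraint through one-sided variations. Taking $\phi\ge0$ supported in the interior of $Z(u^*)$ (where $u^*\equiv0$, hence $u^*_x=0$) and $\psi\ge0$ supported in $\{u^*\ge\delta\}\subset P(u^*)$ with $\int\phi=\int\psi$, the perturbation $u^*+\eps(\phi-\psi)$ lies in ${\cal C}_M$ for small $\eps>0$, so $\frac{d}{d\eps}E(u^*+\eps(\phi-\psi))|_{\eps=0^+}\ge0$. Using (\ref{eq:EL}) to evaluate the $\psi$-contribution and $u^*=u^*_x=0$ on the support of $\phi$ for the $\phi$-contribution, this reduces to $\int\phi(\lambda-\cos x)\ge0$, giving $\cos x\le\lambda$ a.e.\ on the interior of $Z(u^*)$; at the finitely many isolated zeros one instead notes $u^*_{xx}\ge0$, so (\ref{eq:EL}) again gives $\lambda\ge\cos x$. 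By continuity this yields $\lambda\ge\sup\{\cos x:x\in Z(u^*)\}$.

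The main obstacle is the free-boundary regularity: proving $u^*\in C^{1,1}$ with $u^*_x=0$ on $\partial P(u^*)$. The difficulty is that only one-sided variations are available at a contact point (one can raise $u^*$ off the obstacle but not lower it), so they do not directly control the slope from inside $P(u^*)$. I would obtain the zero contact angle from an inner (domain) variation: flowing by $\Phi_t(x)=x+t\xi(x)$ with $\xi$ supported near a contact point and using the mass-preserving pushforward $u_t=(u^*\circ\Phi_t^{-1})\,(\Phi_t^{-1})'$ keeps $u_t\in{\cal C}_M$, so $\frac{d}{dt}E(u_t)|_{t=0}=0$; the inner variation of the Dirichlet term localizes, after using (\ref{eq:EL}) in the interior, to a boundary contribution involving $(u^*_x)^2$ at each point of $\partial P(u^*)$, which can vanish only if $u^*_x=0$ there. (Equivalently, one may invoke that this is a lower-order, mass-constrained perturbation of the classical obstacle problem with obstacle $\equiv0$, whose minimizers are $C^{1,1}$ and meet the obstacle with matching gradient.) Since $\partial P(u^*)$ is finite and $u^*_x=0$ on $Z(u^*)$, continuity of $u^*_x$ across $\partial P(u^*)$ gives $u^*\in C^1(\Omega)$; as $u^*_{xx}$ is bounded on $P(u^*)$ and vanishes on $Z(u^*)$, the derivative $u^*_x$ is Lipschitz, i.e.\ $u^*\in C^{1,1}(\Omega)$.

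Finally, positivity of $\lambda$ follows by integrating (\ref{eq:EL}) over $P(u^*)$. The zero contact angle (and periodicity when $P(u^*)=\Omega$) kills the boundary terms, so $\int_{P(u^*)}u^*_{xx}\,dx=0$ and $\lambda\,|P(u^*)|=\alpha^2M+\int_{P(u^*)}\cos x\,dx$. Arguing by contradiction, if $\lambda\le0$ then the bound $\lambda\ge\sup_{Z}\cos x$ forces $\cos x\le0$ on $Z(u^*)$, so $Z(u^*)\subseteq[\frac{\pi}{2},\frac{3\pi}{2}]$ and hence $(-\frac{\pi}{2},\frac{\pi}{2})\subseteq P(u^*)$; but then $\int_{P(u^*)}\cos x\,dx\ge\int_{-\pi/2}^{\pi/2}\cos x\,dx+\int_{\pi/2}^{3\pi/2}\cos x\,dx=0$, giving $\lambda\,|P(u^*)|\ge\alpha^2M>0$, a contradiction. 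Hence $\lambda>0$.
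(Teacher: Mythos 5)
Your derivation of the Euler--Lagrange equation on $P(u^*)$ (two-sided variations) and your transfer-variation inequality on the interior of $Z(u^*)$ are sound, and your concluding argument for $\lambda>0$ would be valid if the earlier claims held; but two of those claims have genuine gaps. First, the assertion that $P(u^*)$ has finitely many components ``because a nontrivial analytic solution has only isolated zeros'' is a non-sequitur: $u^*$ solves~(\ref{eq:EL}) only on $P(u^*)$, component by component, so analyticity holds only there and gives no control whatsoever on the closed set $Z(u^*)$, which a priori could be a Cantor-type set with countably many accumulating components of $P(u^*)$. (Globally $u^*$ is certainly not analytic -- the actual minimizer vanishes identically on an arc.) This unproved finiteness underpins both your proof of $\lambda\ge\sup_{Z(u^*)}\cos x$ (you decompose $Z(u^*)$ into its interior plus finitely many isolated points, a decomposition that fails for sets whose points are neither interior nor isolated) and your gluing argument for $C^1$ and $C^{1,1}$ regularity.

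Second, the inner-variation argument does not prove the zero contact angle. At a contact point $\tau$ shared by two components of $P(u^*)$ (an isolated zero), the boundary contribution of the inner variation is $\frac{1}{2}\xi(\tau)\bigl(u^*_x(\tau_-)^2-u^*_x(\tau_+)^2\bigr)$, which vanishes for a symmetric corner $u^*_x(\tau_\pm)=\mp a$ with $a>0$; inner variations therefore yield only $|u^*_x(\tau_-)|=|u^*_x(\tau_+)|$. This is precisely the situation the paper confronts in Lemma~\ref{lem:critical}, where a separate, genuinely vertical variation (the $\psi^\eps$ construction) is needed to exclude such corners. For the minimizer, the paper sidesteps both of your gaps with exactly the step you dismissed as insufficient: one-sided variations $u^*+\eps\phi$ with \emph{arbitrary} nonnegative test functions $\phi$ (not restricted to $Z(u^*)$ or $P(u^*)$) give the distributional inequality $u^*_{xx}+\alpha^2u^*+\cos x\le\lambda$ on all of $\Omega$. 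This inequality crosses the free boundary: it makes $u^*(x)-\frac{\lambda+1}{2}(x-\tau)^2$ locally concave, so the support-line argument gives $u^*_x(\tau)=0$ at \emph{every} zero (corners excluded automatically); it gives $\lambda\ge\cos\tau$ at every $\tau\in Z(u^*)$ (otherwise $u^*$ plus a small positive-definite quadratic would be concave near $\tau$, contradicting the interior minimum $u^*(\tau)=0$); combined with~(\ref{eq:EL}) on $P(u^*)$ it yields $C^{1,1}$; and testing it with $\phi\equiv 1$ gives $\lambda\ge\alpha^2 M/(2\pi)>0$ at once -- all with no hypothesis on the structure of $Z(u^*)$. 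Your parenthetical appeal to obstacle-problem regularity is the right instinct, but invoking that theory requires exactly this global variational inequality, which your proof never establishes.
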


\begin{proof} Let $\phi$ be a smooth
$2\pi$-periodic test function, and set
$$
u^\eps= \frac{M}{M^\eps} (u^*+\eps\phi)\,,
\quad M^\eps=\int_\Omega \bigl(u^*+\eps\phi\bigr)\, dx\,.
$$
We compute the first variation of $E$ about $u^*$ as 
$$
\frac{d}{d\eps} E(u^\eps) \Big\vert_{\eps=0}
= \int_\Omega 
\bigl( u^*_x\phi_x -\alpha^2 u^*\phi - \phi\cos x
+\lambda \phi \bigr) \, dx\,,
$$
where 
$$
\lambda=-\frac{1}{M}\Bigl(2E(u^*) +\int_\Omega u^*\cos x\, dx\Bigr)\,.
$$
Let $\phi$ be a smooth test function supported in $P(u)$, and set
$$
\eps^0=
\left(\max_{x\in {\rm supp}\phi} \frac{|\phi(x)|}{u^*(x)}\right)^{-1}>0\,.
$$
By construction, $u^\eps(x)\ge 0$ for $|\eps|\le \eps_0$,
and therefore the first variation of $E$ along $u^\eps$ must vanish.
Since this holds for every smooth test function
supported on $P(u^*)$, the Euler-Lagrange equation holds there.

Similarly, the first variation is nonnegative for
every nonnegative test function $\phi$,
because positive values of $\eps$ yield
admissible competitors $u^\eps$ on ${\cal C}_M$. 
This means that $u^*$ satisfies the variational inequality
$$
u_{xx} +\alpha^2 u + \cos x \le \lambda \quad \
\mbox{on}\ \Omega
$$
in the sense of distributions. Taking $\phi\equiv 1$, we that 
$\lambda\ge \alpha^2 \frac{M}{2\pi}>0$,
and by considering nonnegative test functions supported
on $Z(u^*)$, we obtain the claimed inequality for $\lambda$.

To see that the contact angles are zero, assume that $u^*(\tau)=0$.
The variational inequality implies that $u^*(x)- \frac{\lambda+1}{2}(x-\tau)^2$ 
is concave in $x$.  In particular,
the graph of $u^*$ lies below a support line at $x=\tau$. 
Solving for $u^*(x)$, we see that
$$
0 \le u^*(x)\le \frac{\lambda+1}{2}(x-\tau)^2 + b(x-\tau)\,,
$$
where $b$ is the slope of the support line. 
It follows that $b=0$, and $u^*$ is differentiable at $\tau$ with
$u^*_x(\tau)=0$. 
\end{proof}

\bigskip Our next goal is to determine the minimizers of 
$E$ on ${\cal C}_M$ explicitly. At first sight, this appears to be a simple
matter of minimizing over the two free parameters
in the general solution of the Euler-Lagrange equation.
This solution is given by
\begin{equation}\label{eq:EL-sol}
u(x) = \frac{\lambda}{\alpha^2} + u^0(x) + 
A\cos(\alpha x) + B \sin(\alpha x) \,,
\end{equation}
where 
\begin{equation} \label{eq:u-part} 
u^0(x)= 
\left\{ \begin{array}{ll}
-\frac{1}{2} x \sin x\,,\quad & \alpha=1\,,\\[0.1cm]
\frac{1}{1-\alpha^2} \cos x\,, \quad &
\alpha\ne 1\,.
\end{array}\right.
\end{equation}
A moment's consideration shows that the minimizer cannot be 
a strictly positive function given by Eq.~(\ref{eq:EL-sol}),
unless $A=B=0$ and $M>\frac{2\pi}{|1-\alpha^2|}$ (in which case
$\lambda=\frac{M\alpha^2}{2\pi}$).
If $u$ vanishes somewhere in $\Omega$, then 
$\lambda$ depends implicitly on $M$ through a nonlinear equation.
The number of components of the positivity set
is another unknown, the constants $A$ and $B$ may differ from component to
component, and the contact points that form the boundary of the positivity
set contribute additional free parameters. 

We reduce the number of parameters by observing that 
minimizers of $E$ on ${\cal C}_M$ are necessarily
symmetric decreasing on $[-\pi,\pi]$ about $x=0$. To see this, 
let $u^\#$ be the symmetric decreasing rearrangement of 
$u$~\cite[Section 3.3]{LL}. By definition, each
sub-level set $\{x\in\Omega \mid u^\#(x)>s\}$ is an open
interval centered at $x=0$ that has the same measure as the
corresponding set $\{x\in \Omega \mid u(x)>s\}$. Classical
results ensure that $u^\#\in {\cal C}_M$, and that
\begin{equation}\label{eq:rearrange}
||u^\#||_{L^2}=||u||_{L^2}\,,\quad ||u^\#_x||_{L^2}\le
||u_x||_{L^2}\,,\quad \int_\Omega u^\# \cos x \, dx
\ge \int_\Omega u \cos x\, dx \,,
\end{equation}
see~\cite{Kawohl,PSz}. It follows that
$$
E(u^\#)\le E(u)\,.
$$
If $u$ is a minimizer, then $E(u^\#)=E(u)$, and in particular, the last
inequality in (\ref{eq:rearrange}) must hold with
equality. Since the cosine is strictly symmetric  decreasing,
this forces $u$ to be symmetric decreasing as well~\cite[Theorem
3.4]{LL}.

It is now easy to determine the minimizer by solving the
Euler-Lagrange equation  with zero contact angle
boundary conditions on a symmetric interval $(-\tau,\tau)$.
The remaining three parameters are the coefficient $A$, the Lagrange 
multiplier $\lambda$, and the contact point $\tau$.
The next lemma will be used to show that the 
positivity set of a minimizer grows with its mass.

\begin{lemma} [One-sided derivatives.] 
\label{lem:one-sided}
Assume that $u^*$ minimizes the energy on ${\cal C}_M$.
If $P(u^*)=(-\tau,\tau)$ for some $\tau\in (0,\pi]$, then $u^*_{xx}(\tau_-)>0$ 
and $\lambda>\cos\tau$.
\end{lemma}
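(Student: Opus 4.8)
\medskip
\noindent\textbf{Proof proposal.}
The plan is to recognize that the two conclusions are really one statement, and then to rule out the degenerate case by a Taylor expansion at the contact point. First I would record that on $P(u^*)=(-\tau,\tau)$ the Euler--Lagrange equation~(\ref{eq:EL}) reads $u^*_{xx}=\lambda-\alpha^2u^*-\cos x$, and since $u^*$ is of class ${\cal C}^{1,1}$ with $u^*(\tau)=u^*_x(\tau)=0$ by Lemma~\ref{lem:EL}, letting $x\to\tau_-$ gives $u^*_{xx}(\tau_-)=\lambda-\cos\tau$. Hence the assertions $u^*_{xx}(\tau_-)>0$ and $\lambda>\cos\tau$ are equivalent, and the task is to upgrade the weak bound $\lambda\ge\cos\tau$ — which Lemma~\ref{lem:EL} already supplies, since $\sup\{\cos x\mid x\in Z(u^*)\}=\cos\tau$ — to a strict inequality.

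To this end I would argue by contradiction, assuming $\lambda=\cos\tau$, so that $u^*(\tau)=u^*_x(\tau)=u^*_{xx}(\tau_-)=0$. On the open interval $(-\tau,\tau)$ the function $u^*$ solves a second order ODE with smooth coefficients, so a bootstrap shows $u^*\in{\cal C}^\infty(-\tau,\tau)$, with each one-sided derivative extending continuously to $\tau$. Differentiating the equation gives $u^*_{xxx}=-\alpha^2u^*_x+\sin x$, so $u^*_{xxx}(\tau_-)=\sin\tau$. For $\tau\in(0,\pi)$ this is strictly positive, and the Taylor--Lagrange formula about $\tau$ yields $u^*(x)=\frac16 u^*_{xxx}(\xi)(x-\tau)^3$ for some $\xi$ between $x$ and $\tau$; as $x\to\tau_-$ the factor $u^*_{xxx}(\xi)\to\sin\tau>0$ while $(x-\tau)^3<0$, so $u^*(x)<0$ just inside the support, contradicting $u^*\ge0$.

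The remaining case $\tau=\pi$ is the real obstacle, because there the cubic coefficient $\sin\pi$ vanishes and the sign must be read off one order higher. Differentiating once more gives $u^*_{xxxx}=-\alpha^2u^*_{xx}+\cos x$, whence $u^*_{xxxx}(\pi_-)=-\alpha^2\cdot 0+\cos\pi=-1<0$ under the standing assumption. Now the fourth order Taylor--Lagrange formula gives $u^*(x)=\frac{1}{24}u^*_{xxxx}(\xi)(x-\pi)^4$, which is negative for $x$ near $\pi$ since $(x-\pi)^4>0$, again contradicting nonnegativity. In either case $\lambda=\cos\tau$ is impossible, so $\lambda>\cos\tau$ and therefore $u^*_{xx}(\tau_-)=\lambda-\cos\tau>0$.

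I expect the main subtlety to lie in the borderline configuration $\tau=\pi$, where the droplet wraps around to a single touchdown at the top of the cylinder: there $x=\pi$ is an interior point of the circle, the zero contact angle of Lemma~\ref{lem:EL} is simply the statement that $u^*$ has a smooth interior minimum, and the nondegeneracy of the contact must be extracted from the fourth rather than the third derivative. The only other point requiring care is to confirm that the one-sided derivatives of $u^*$ at $\tau$ exist to the order needed for the Taylor remainder, which follows from the interior smoothness together with the continuous boundary limits of the successive derivatives of the right-hand side of the ODE.
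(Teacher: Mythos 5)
Your proposal is correct and follows essentially the same route as the paper: assume the contact is degenerate, use the differentiated Euler--Lagrange equation to get $u^*_{xxx}(\tau_-)=\sin\tau$, which rules out $\tau\in(0,\pi)$, and then dispose of the remaining case $\tau=\pi$ via $u^*_{xxxx}(\pi_-)=-1$. The only cosmetic differences are that you phrase the contradiction hypothesis as $\lambda=\cos\tau$ rather than $u^*_{xx}(\tau_-)=0$ (equivalent by the Euler--Lagrange equation at $\tau$) and that your explicit Taylor--Lagrange remainders spell out the paper's ``sign of the first non-vanishing one-sided derivative'' argument.
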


\begin{proof} Since $u^*$ satisfies
the Euler-Lagrange equation by Lemma~\ref{lem:EL},
it has bounded derivatives of all orders on $(-\tau,\tau)$.
We analyze the sign of the first
non-vanishing one-sided derivative of $u^*$. 
By symmetry, it suffices to consider
the right endpoint at $x=\tau$.
The positivity of $u^*$ implies that 
$u^*_{xx}(\tau_-)\ge 0$. 

Suppose that
$u^*_{xx}(\tau_-)=0$, then $u^*_{xxx}(\tau_-)\le 0$. 
On the other hand, by differentiating Eq.~(\ref{eq:EL}), 
we obtain $u^*_{xxx}(\tau_-)=\sin\tau\ge 0$. 
This leaves only the possibility that $\tau=\pi$.
Differentiating once more, we obtain
$u^*_{xxxx}(\pi_-)=-1$, which is the wrong sign
for $u^*$ to have a minimum at $\pi$. It follows that
$u^*_{xx}(\tau_-)>0$. The claimed inequality for $\lambda$ follows from
Eq.~(\ref{eq:EL}).
\end{proof}

\bigskip 
The following theorem summarizes our results, see Figure~3.

\begin{figure}[t]
\label{fig:mass-tau}
\begin{center}
\includegraphics[height=5.5cm] {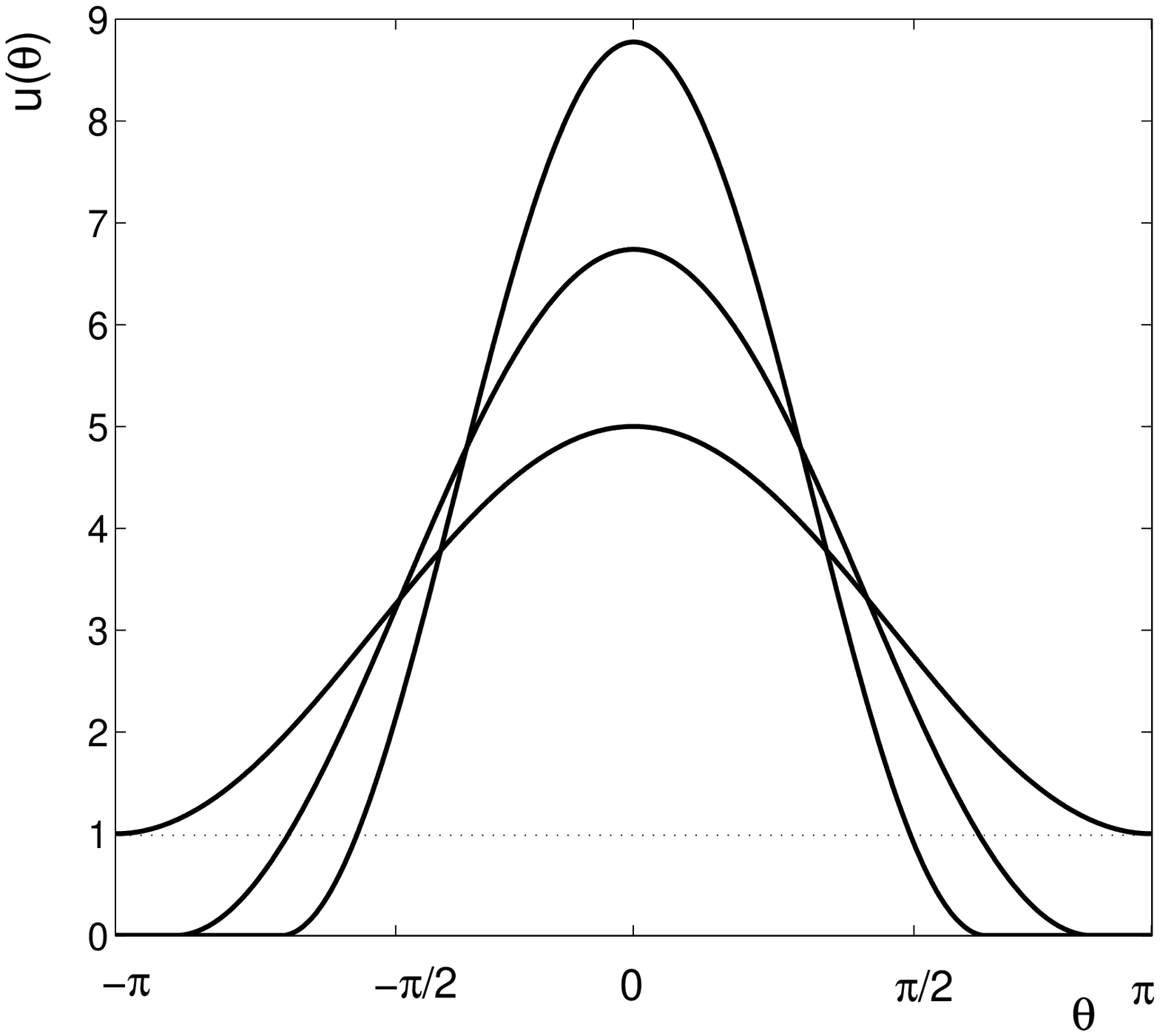}
\quad \quad \quad \quad
\includegraphics[height=5.5cm] {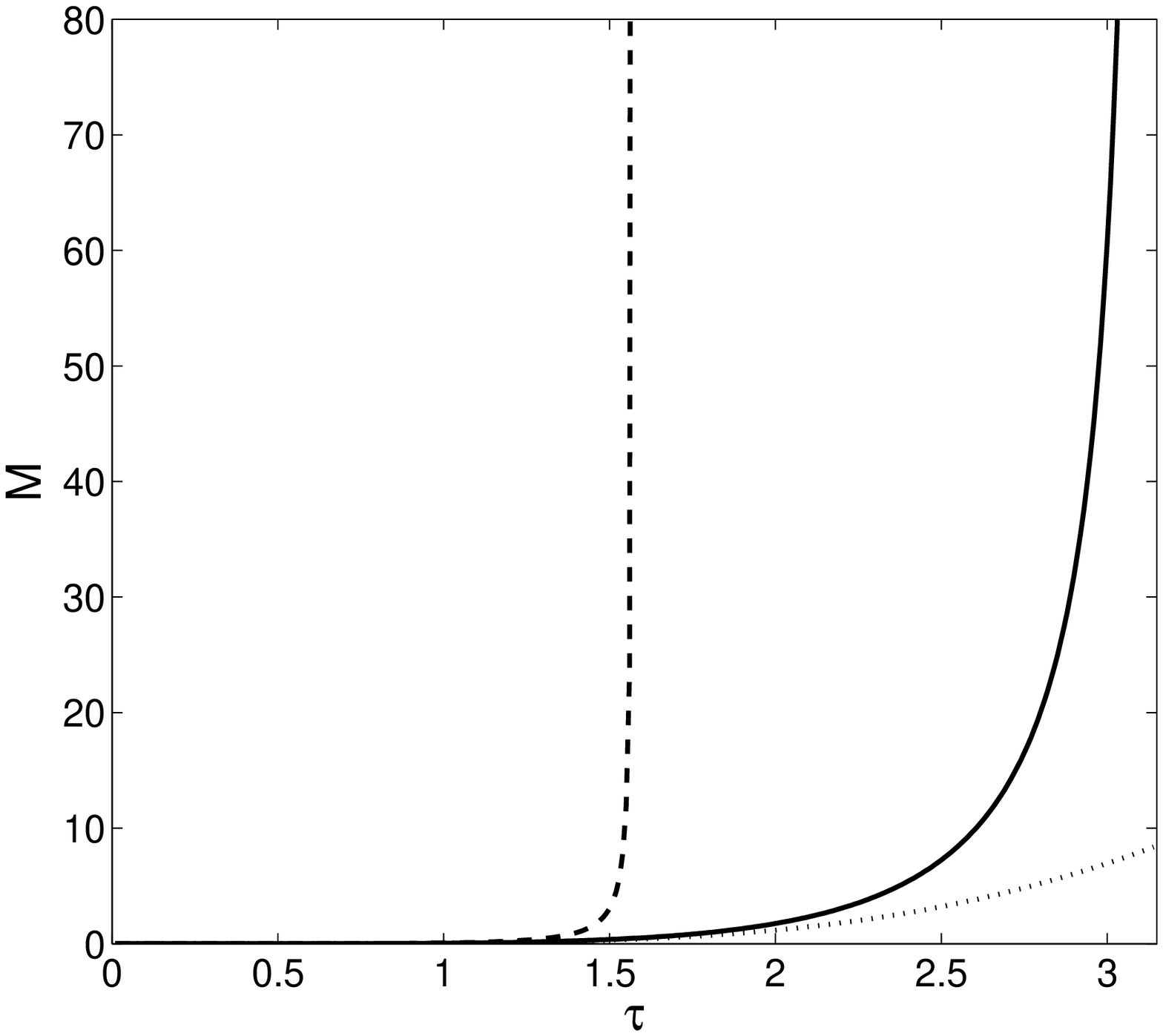}
\end{center}
\caption{\small The energy minimizer for
$\alpha=0.5, 1, 2$ with initial data $u_0=3$
({\em left}).  Mass versus contact point for $\alpha = 2, 1, 0.5$ ({\em right}).}
\end{figure}

\begin{theorem}[Description of the energy minimizers.]
\label{thm:min} Let $E$ be the energy functional in
Eq.~(\ref{eq:def-E}) for some $\alpha > 0$.
For each $M>0$, $E$ has a unique nonnegative
minimizer of mass $M$.  The minimizer is strictly symmetric
decreasing on its positivity set. 
It depends continuously on $M$ in ${\cal C}^{1,1}$,
and increases pointwise with 
$M$ in the sense that for any pair of minimizers $u_1^*,u_2^*$ of mass
$M_1,M_2$,
$$
M_1< M_2\ \Longrightarrow\ u_1^*(x)< u_2^*(x)\,, \quad
x\in P(u_1^*)\,.
$$
If $\alpha<1$ and $M(1-\alpha^2)>2\pi$, then the minimizer is 
strictly positive and given by
\begin{equation}
\label{eq:u-positive}
u^*(x)=\frac{M}{2\pi} + \frac{1}{1-\alpha^2}\cos x\,,
\quad x\in \Omega\,.
\end{equation}
Otherwise, it is given by
\begin{equation}\label{eq:u}
u^*(x) = u^0(x) + A\cos(\alpha x)-
u^0(\tau)- A \cos(\alpha\tau)\,,\quad |x|<\tau
\end{equation}
and vanishes elsewhere. Here, $\tau$ is a  smooth increasing
function of $M$ with
$\tau\cdot \max\{\alpha,1\}<\pi$, 
the function $u^0$ is given by Eq.~(\ref{eq:u-part}),
and $A=\frac{u^0_x (\tau)}{\alpha\sin(\alpha\tau)}$.
\end{theorem}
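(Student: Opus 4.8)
The plan is to reduce the minimization to a one-parameter family using the three lemmas above together with the symmetrization argument, and then to analyze how the mass depends on that parameter. By the rearrangement discussion preceding the theorem, any minimizer $u^*$ is symmetric decreasing on $[-\pi,\pi]$, so each of its superlevel sets is a centered interval and its positivity set is either all of $\Omega$ or a single interval $(-\tau,\tau)$ with $\tau\in(0,\pi]$. In either case Lemma~\ref{lem:EL} shows that $u^*$ solves~(\ref{eq:EL}) on $P(u^*)$ with a Lagrange multiplier $\lambda>0$, so $u^*$ is the restriction of a general solution~(\ref{eq:EL-sol}), and evenness forces $B=0$.

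First I would dispose of the strictly positive case $P(u^*)=\Omega$. Then $u^*$ is a smooth $2\pi$-periodic solution of~(\ref{eq:EL}). When $\alpha\notin\ZZ$, periodicity of $\cos(\alpha x)$ forces $A=0$; moreover for $\alpha>1$ the resulting profile $\frac{\lambda}{\alpha^2}+\frac{1}{1-\alpha^2}\cos x$ is symmetric increasing, which contradicts the symmetric decreasing property unless it is constant. This leaves $\alpha<1$, where $A=0$, the mass constraint gives $\lambda=\frac{M\alpha^2}{2\pi}$, and nonnegativity at $x=\pi$ is equivalent to $M(1-\alpha^2)>2\pi$; this is exactly~(\ref{eq:u-positive}).

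For the dry-region case $P(u^*)=(-\tau,\tau)$ with $\tau<\pi$ (or the touchdown $\tau=\pi$), I would impose on the even solution $\frac{\lambda}{\alpha^2}+u^0(x)+A\cos(\alpha x)$ the endpoint conditions $u^*(\tau)=0$ and the zero contact angle $u^*_x(\tau)=0$ supplied by Lemma~\ref{lem:EL}. The slope condition gives $A=\frac{u^0_x(\tau)}{\alpha\sin(\alpha\tau)}$, and the height condition eliminates $\lambda$, producing precisely the profile~(\ref{eq:u}) as a function of the single parameter $\tau$. Here I would use the sign information $u^*_{xx}(\tau_-)>0$ and $\lambda>\cos\tau$ of Lemma~\ref{lem:one-sided}, together with the constraint $\tau\max\{\alpha,1\}<\pi$ (which in particular makes $\sin(\alpha\tau)>0$, so $A$ is well defined), to verify that the candidate is genuinely nonnegative on $(-\tau,\tau)$.

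The heart of the matter, and the step I expect to be the main obstacle, is to show that the mass $M(\tau)=\int_{-\tau}^{\tau}u^*_\tau\,dx$ of the profile~(\ref{eq:u}) is a smooth strictly increasing bijection from the admissible interval of $\tau$ onto the appropriate range of masses, matching the strictly positive branch at the touchdown value for $\alpha<1$. The computation simplifies dramatically: differentiating~(\ref{eq:u}) in $\tau$ and using the formula for $A$ collapses the boundary terms, giving $\partial_\tau u^*_\tau(x)=A'(\tau)\bigl[\cos(\alpha x)-\cos(\alpha\tau)\bigr]$. Since $\alpha\tau<\pi$ makes the bracket positive on $(-\tau,\tau)$, both the pointwise monotonicity $\partial_\tau u^*_\tau>0$ and, after integrating, the mass monotonicity $\frac{dM}{d\tau}>0$ reduce to the single inequality $A'(\tau)>0$; establishing this is where the real work lies. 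Granting it, strict monotonicity of $M(\tau)$ gives uniqueness of the minimizer for each mass (a minimizer exists by Lemma~\ref{lem:energy-bound} and must lie in this family), smooth dependence of $\tau$ on $M$ by the inverse function theorem, continuity of $u^*$ in ${\cal C}^{1,1}$ through the explicit formula, and finally the pointwise inequality $M_1<M_2\Rightarrow\tau_1<\tau_2\Rightarrow u_1^*<u_2^*$ on $P(u_1^*)$ by integrating $\partial_\tau u^*_\tau>0$.
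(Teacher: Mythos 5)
Your strategy is essentially the paper's: symmetrize to conclude the minimizer is symmetric decreasing with positivity set $\Omega$ or a centered interval, use Lemma~\ref{lem:EL} to reduce to Eq.~(\ref{eq:EL-sol}) with $B=0$, split into the strictly positive branch and the droplet branch, and then channel uniqueness, smooth dependence on $M$, and pointwise monotonicity through strict monotonicity of $\tau\mapsto M(\tau)$, which you correctly reduce --- via $\partial_\tau u^*(x;\tau)=A'(\tau)\bigl[\cos(\alpha x)-\cos(\alpha\tau)\bigr]$ --- to the single inequality $A'(\tau)>0$.

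The genuine gap is that you never prove $A'(\tau)>0$: you call it "where the real work lies" and then proceed by "granting it." Since every remaining conclusion of the theorem hinges on this inequality, the proposal as written does not prove the theorem. What makes this avoidable is that you already hold the needed tool: you cite Lemma~\ref{lem:one-sided} (i.e.\ $u^*_{xx}(\tau_-)>0$ and $\lambda>\cos\tau$) to check nonnegativity of the candidate, but not where it is decisive. The paper's one-line closure is this: from Eq.~(\ref{eq:u}), $u^*_x(x;\tau)=u^0_x(x)-A(\tau)\,\alpha\sin(\alpha x)$, so $u^*_{x\tau}(x;\tau)=-A'(\tau)\,\alpha\sin(\alpha x)$; differentiating the zero-contact-angle identity $u^*_x(\tau;\tau)\equiv 0$ with respect to $\tau$ gives $u^*_{xx}(\tau;\tau)+u^*_{x\tau}(\tau;\tau)=0$, hence
$$
A'(\tau)\,\alpha\sin(\alpha\tau)\;=\;-u^*_{x\tau}(\tau;\tau)\;=\;u^*_{xx}(\tau;\tau)\;>\;0\,,
$$
the last inequality being exactly Lemma~\ref{lem:one-sided}; since $\tau\max\{\alpha,1\}<\pi$ forces $\sin(\alpha\tau)>0$, this yields $A'(\tau)>0$. (If one wants this for every member of the family rather than only at a minimizer, note $u^*_{xx}(\tau;\tau)=\lambda-\cos\tau$ by the Euler--Lagrange equation, or verify directly that $u^*_{xx}(\tau;\tau)\sin(\alpha\tau)=\int_0^\tau\sin s\,\sin(\alpha s)\,ds>0$ on the admissible range of $\tau$.) Two smaller omissions in the same spirit: you assume rather than derive the constraint $\tau\max\{\alpha,1\}<\pi$ (the paper gets $\tau<\pi$ for $\alpha=1$ from the impossibility of continuing $u^0(x)=-\frac{1}{2}x\sin x$ periodically in ${\cal C}^{1,1}$ across $x=\pi$, and $\alpha\tau<\pi$ for $\alpha>1$ from the symmetric-decreasing property), and your uniqueness argument for $\alpha<1$ additionally needs the mass-range limits $M\to 0$, $M\to\frac{2\pi}{1-\alpha^2}$ as $\tau\to 0,\pi$ that you only gesture at, whereas the paper settles that case at once by strict convexity of $E$.
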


\begin{proof}  Fix $\alpha> 0$ and $M>0$.  By Lemma~\ref{lem:energy-bound},
there exists a minimizer $u^*$ of mass $M$.
If $u^*$ is strictly positive, then
Eq.~(\ref{eq:EL-sol}) holds for all $x\in\Omega$. Since 
positive minimizers are symmetric about $x=0$,
smooth, periodic, and have mass $M$, we conclude that $\alpha<1$ and 
Eq.~(\ref{eq:u-positive}) holds. In order for $u^*$ to
be nonnegative and symmetric decreasing we must have
$M(1-\alpha^2)\ge 2\pi$.  In that region, $u^*$ is clearly strictly
increasing in $M$.

If, on the other hand, the positivity constraint is active, 
then $u^*$ is symmetric decreasing on some interval $(-\tau,\tau)$ and
vanishes for $|x|\ge \tau$. By Lemma~\ref{lem:EL}, $u^*\in
{\cal C}^{1,1}(\Omega)$ and $u^*_x(\pm\tau)=0$. 
On $(-\tau,\tau)$, $u^*$ is 
given by Eq.~(\ref{eq:EL-sol}). Since $u^*$ and $u^0$ are even,
$B=0$.  The Dirichlet condition at $\tau$ yields 
$$\lambda=-\alpha^2\bigl(A\cos(\alpha\tau)+u^0(\tau)\bigr)\,,
$$
the Neumann condition determines $A$, and we find that
Eq.~(\ref{eq:u}) holds.  We denote this function by $u^*(x;\tau)$.
If $\tau\cdot \max\{\alpha,1\}<\pi$, we claim
that $u^*(x:\tau)$ is indeed nonnegative, symmetric decreasing in $x$,
and strictly increasing with $\tau$. 
To see this, we differentiate
Eq.~(\ref{eq:u}), and use that $u_x^*(\tau;\tau)=0$ 
to obtain $$ \frac{dA}{d\tau}\cdot  \alpha\sin\alpha\tau = -
u^*_{x\tau}(\tau;\tau)= u^*_{xx}(\tau;\tau)>0\,.
$$
By the chain rule, and using once more that $u^*_x(\tau;\tau)=0$,
we have
$$
u^*_\tau (x;\tau) = \frac{dA}{d\tau} \cdot \bigl( \cos(\alpha
x)-\cos(\alpha\tau) \bigr)>0 \quad\mbox{for}\ |x|<\tau\,.
$$
Since $u^*$ vanishes identically when $\tau=0$, this confirms that it
is positive and strictly symmetric decreasing for $|x|<\tau$. 

The mass of $u^*$ is given by $M(\tau)=\int_{-\tau}^\tau u^*(x;\tau)\, dx$.
We use that $u^*_x(\tau;\tau)=0$ to compute
$$
\frac{dM}{d\tau} = \frac{dA}{d\tau}\int_{-\tau}^\tau
\bigl(\cos(\alpha x)-\cos(\alpha\tau)\bigr)\,dx>0\,,
$$
and infer that  we can solve for $\tau=\tau(M)$ as a strictly
increasing smooth function of $M$. By the chain rule and the inverse
function theorem,
$$
\frac{d}{d M} u^*(x;\tau(M)) = \frac{\cos(\alpha x)-
\cos(\alpha\tau)}{ \int_{-\tau}^\tau 
\bigl(\cos(\alpha x')- \cos(\alpha\tau)\bigr)\, dx'}
>0 \,.
$$

It remains to determine the ranges where Eq.~(\ref{eq:u-positive}) and
(\ref{eq:u}) hold. For $\alpha<1$, the energy minimizer 
on ${\cal C}_M$ is unique by the strict convexity of $E$. If 
$M\ge \frac{2\pi }{1-\alpha^2}$, the minimizer
is given by Eq.~(\ref{eq:u-positive}). 
For smaller values of the mass, we use instead Eq.~(\ref{eq:u}),
and compute that $M\to 0$ as $\tau\to 0$ 
and $M\to \frac{2\pi}{1-\alpha^2}$ as $\tau\to \pi_-$. Continuous dependence
on $M$ follows, since Eq.~(\ref{eq:u}) agrees with
Eq~(\ref{eq:u-positive}) at $M=\frac{2\pi}{1-\alpha^2}$.

When $\alpha \ge 1$, the positivity constraint is 
always active, because $E$ is not bounded below without it.
Therefore, the minimizer is given by Eq.~(\ref{eq:u})
on some interval $(-\tau,\tau)$. For $\alpha=1$, we 
must have $\tau<\pi$, because the
particular solution $u^0( x )= -\frac{1}{2}x\sin x$ from
Eq.~(\ref{eq:u-part}) cannot be continued as a differentiable
$2\pi$-periodic function  across $x=\pi$, in violation of
Lemma~\ref{lem:EL}. It is easy to check from Eq.~(\ref{eq:u}) that
$M\to 0$ as $\tau\to 0$ and $M\to\infty$ as $\tau\to \pi$.
For $\alpha>1$, we have that
necessarily $\alpha\tau<1$, since otherwise the function 
defined by Eq.~(\ref{eq:u}) fails to be symmetric
decreasing.  Since $M\to 0$ as $\tau\to 0$ and $M\to\infty$ as $\tau
\to \alpha^{-1}\pi$, the  theorem follows.
\end{proof}

\section{Steady states}
\label{sec:steady}

In this section, we investigate the relationship between
steady states of Eq.~(\ref{eq:PDE})
and critical points of the energy in Eq.~(\ref{eq:def-E}).
For $\alpha\le 1$ and $n\ge 1$, we will show that
the global minimizer $u^*$ determined in Theorem~\ref{thm:min}
is the unique point in ${\cal C}_M$ where the energy
dissipation defined in Eq.~(\ref{eq:def-D}) vanishes,
but for $\alpha>1$ there are additional steady states.  
We start with some definitions.

\begin{definition}{\em Let $u\in {\cal C}_M$ such that 
$E(u)<\infty$, and let $P(u)$ be its positivity set. 
\begin{itemize}
\item 
$u\in H^2(\Omega)$ is a {\em strong steady state} of Eq.~(\ref{eq:PDE}) if
for every smooth $2\pi$-periodic test function $\phi$
$$
\int_\Omega \bigl(u_{xx} + \alpha^2\,u +
\cos x\bigr)\cdot \bigl(u^n\phi_x\bigr)_x \, dx  = 0\,;
$$
\item $u$ is an {\em $L^2$-critical point} 
of the energy on ${\cal C}_M$ if 
every differentiable curve $\gamma: (-\eps_0,\eps_0)\to {\cal C}_M$ 
in $L^2$ with $\gamma(0)=u$ satisfies
$$
E(\gamma(\eps)) \ge E(u) - o\bigl(||\gamma(\eps)-u||_{L^2}\bigr)\,,
\quad \mbox{as}\ \eps \to 0\,.
$$
\end{itemize}
}
\end{definition}

Strong steady states are time-independent
strong solutions in the sense of Eq.~(\ref{eq:BF-strong}).
As elements of $H^2$, they are of class ${\cal C}^{1,1}$ and
meet dry regions with zero contact angles.
The reason why we define critical points in the $L^2$-topology 
rather than in $H^1$ is that the boundary of ${\cal C}_M$ in $H^1$, 
which consist of nonnegative
functions of mass $M$ that vanish at some point on $\Omega$,
contains many  curves that are differentiable in $L^2$ but not in $H^1$.  
By analogy with the subdifferential in convex analysis, 
we ask only for a lower bound on the energy difference 
because $E$ is lower semicontinuous, but not continuous, on ${\cal C}_M$ 
with the $L^2$-norm. 

The following two lemmas relate these notions to the Euler-Lagrange 
equation.

\begin{lemma}[Steady states with zero dissipation.]
\label{lem:steady} Let $u\in {\cal C}_M$,
and define the dissipation $D(u)$ by Eq.~(\ref{eq:def-D}).
If either
\begin{itemize}
\item 
$u$ satisfies Eq.~(\ref{eq:EL}) on each component $C$ of 
$P(u)$ with a constant $\lambda =\lambda(C)$
and with $u=u_x=0$ on $\partial C$, 
\end{itemize}
or, equivalently,
\begin{itemize} \item 
$u\in H^3_{loc} (P(u))\cap H^2(\Omega)$ with $u=u_x=0$ on $\partial P(u)$ 
and $D(u)=0$,
\end{itemize}
then $u$ is a strong steady state.
\end{lemma}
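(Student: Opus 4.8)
The plan is to abbreviate $w = u_{xx} + \alpha^2 u + \cos x$, so that $w_x = u_{xxx} + \alpha^2 u_x - \sin x$ is exactly the expression inside the dissipation; then $D(u) = \int_{P(u)} u^n w_x^2\,dx$ by Eq.~(\ref{eq:def-D}), and the strong steady-state condition reads $\int_\Omega w\,(u^n\phi_x)_x\,dx = 0$ for every smooth $2\pi$-periodic test function $\phi$. I would first settle the equivalence of the two bulleted hypotheses, and then derive this steady-state identity from either of them.

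For the equivalence: if $u$ solves Eq.~(\ref{eq:EL}) on each component $C$ of $P(u)$ with its own constant $\lambda=\lambda(C)$, then $w \equiv \lambda(C)$ on $C$, so $w_x = 0$ on $P(u)$ and hence $D(u)=0$; moreover $u$ solves a linear ODE with bounded right-hand side on each $C$, so it is smooth there and lies in $H^3_{loc}(P(u))$, while the zero contact data $u=u_x=0$ on $\partial P(u)$ make $u$ globally $C^{1,1}$, and therefore $u\in H^2(\Omega)$. Conversely, if $u\in H^3_{loc}(P(u))\cap H^2(\Omega)$ with the contact conditions and $D(u)=0$, then since $u^n>0$ on $P(u)$ the vanishing of $\int_{P(u)} u^n w_x^2$ forces $w_x=0$ a.e.\ on $P(u)$; on each component $w$ is therefore constant, which is precisely Eq.~(\ref{eq:EL}) with $\lambda=\lambda(C)$. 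Thus both hypotheses reduce to the same statement, namely that $w$ is locally constant on $P(u)$ with $u=u_x=0$ on $\partial P(u)$.

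The core step is to deduce $\int_\Omega w\,(u^n\phi_x)_x\,dx=0$ from this. Writing $g = u^n\phi_x$, I would first record that $g$ is continuous on $\Omega$ and vanishes on $\overline{Z(u)}$, in particular at every contact point, simply because $u^n=0$ wherever $u=0$ (recall $n>0$); and that $g\in W^{1,1}(\Omega)$ with no singular part across $\partial P(u)$, because the zero contact angle makes $u$ vanish to second order there, so the a.e.\ derivative $n u^{n-1}u_x\phi_x + u^n\phi_{xx}$ is integrable and $g$ is absolutely continuous. Consequently the integrand $w\,g_x$ is supported in $\overline{P(u)}$, since it vanishes on the interior of $Z(u)$ where $g\equiv 0$, and on each component $C$ the factor $w$ equals the constant $\lambda(C)$. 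Pulling this constant out and applying the fundamental theorem of calculus gives $\lambda(C)\int_C g_x\,dx = \lambda(C)\,[\,g\,]_{\partial C}=0$, because $g$ vanishes at both contact points bounding $C$ (in the degenerate case $P(u)=\Omega$, periodicity plays the role of the vanishing boundary values). Since every component contributes zero, summing yields the identity.

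The main obstacle is the regularity and integrability bookkeeping at the contact set rather than any single hard estimate: I must justify that $g=u^n\phi_x$ is genuinely $W^{1,1}$ across $\partial P(u)$ with no concentrated mass, so that the global integral splits cleanly into per-component pieces and no spurious boundary terms survive. This is exactly where the zero contact angle ($u_x=0$, hence $u\sim c(x-\tau)^2$) and the $C^{1,1}$ regularity enter; they also supply the membership $u\in H^2(\Omega)$ that is built into the very definition of a strong steady state. A secondary nuisance is that $P(u)$ may have infinitely many components, but this is harmless here because each component contributes exactly $0$, so the conclusion follows term by term with no convergence argument needed.
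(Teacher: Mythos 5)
Your proposal is correct and follows essentially the same route as the paper's proof: you establish the equivalence of the two hypotheses by observing that $D(u)=0$ forces $w=u_{xx}+\alpha^2 u+\cos x$ to be locally constant on $P(u)$, and then verify the weak steady-state identity component by component, pulling out the constant $\lambda(C)$ and using that $u^n\phi_x$ vanishes at the contact points. The only difference is that you spell out the $W^{1,1}$/absolute-continuity bookkeeping for $u^n\phi_x$ across $\partial P(u)$, which the paper leaves implicit; this is a welcome clarification, not a change of method.
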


\begin{proof}  We first prove the equivalence of the two
conditions. Assume that
$u\in {\cal C}_M$ satisfies 
Eq.~(\ref{eq:EL}) on each component of $P(u)$.
Since $u_{xxx}=-\alpha^2u_x+\sin x\in H^1(P(u))$,
it follows that $u\in H^3_{loc}(P(u))\cap H^2(\Omega)$, and $D$ vanishes.
Conversely, if $D(u)=0$ then
$u_{xxx} +\alpha^2 u_x-\sin x$ vanishes 
in $L^2_{loc}(P(u))$. This means that 
$u_{xx} +\alpha^2 u+ \cos x$
is locally constant on $P(u)$, i.e., $u$~satisfies Eq.~(\ref{eq:EL})
on each component $C$ of $P(u)$ with a 
constant $\lambda =\lambda(C)$. 

Let $\{C_j\}$ be the collection of connected components of $P(u)$.
If $u$ solves Eq.~(\ref{eq:EL})
on each $C_j$ with some constant $\lambda_j$ 
and $\phi$ is a smooth $2\pi$-periodic test function, then
$$
\Big\vert
\int_{P(u)} (u_{xx} +\alpha^2 u +\cos x)\cdot
(u^n\phi_x)_x\, dx\Big\vert
\le  \sum_{j} \Big\vert 
\int_{C_j} \lambda_j \cdot (u^n\phi_x)_x \, dx\Big\vert 
= 0\,,
$$
showing that $u$ is a strong steady state.
\end{proof}

\bigskip 
Although strong solutions need not be regular enough to justify 
differentiating the energy, for $n\ge 1$
it is not hard to show
(by arguments analogous to~\cite[Lemmas~1 and~2]{GS})
that the dissipation vanishes in all strong 
steady states of Eq.~(\ref{eq:def-D}).
The next lemma generalizes the description of the minimizers
in Lemma~\ref{lem:EL}.

\begin{lemma} [Characterization of critical points.]
\label{lem:critical}
A function $u\in {\cal C}_M$ is an $L^2$-critical point
of the energy if and only if 
there exists a $\lambda\in\RR$ such that
$u$ solves the Euler-Lagrange equation~(\ref{eq:EL}) with this value
of $\lambda$ on every component of $P(u)$ and $u=u_x=0$ on $\partial P(u)$.
\end{lemma}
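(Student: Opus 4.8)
The plan is to prove the two implications separately, using that $E$ is quadratic-plus-linear, so that for $u\in{\cal C}_M$ and any increment $h$ with $u+h\in{\cal C}_M$ one has the \emph{exact} identity
$$
E(u+h)-E(u)=DE(u)[h]+\frac{1}{2}\int_\Omega\bigl(h_x^2-\alpha^2h^2\bigr)\,dx,\qquad
DE(u)[h]=\int_\Omega\bigl(u_xh_x-\alpha^2uh-h\cos x\bigr)\,dx.
$$
Here $E(u)<\infty$, i.e.\ $u\in H^1$, and I write $\lambda=-\frac{1}{M}\bigl(2E(u)+\int_\Omega u\cos x\,dx\bigr)$ for the Lagrange multiplier already appearing in Lemma~\ref{lem:EL}. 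The whole difficulty is that the $L^2$--critical-point condition is a \emph{one-sided} (subdifferential-type) inequality, tested only against curves that remain in the cone ${\cal C}_M$, and I must convert it into the pointwise Euler--Lagrange system.

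For the forward implication I first recover the equation on $P(u)$. Given a test function $\phi$ supported in a single component $C$ of $P(u)$, the normalized line $\gamma(\eps)=\frac{M}{M^\eps}(u+\eps\phi)$ remains in ${\cal C}_M$ for small $|\eps|$ because $u$ is bounded below on ${\rm supp}\,\phi$; since this curve is \emph{two-sided}, the critical-point inequality forces $\frac{d}{d\eps}E(\gamma(\eps))\big|_{\eps=0}=0$, which is the weak form of~(\ref{eq:EL}) on $C$ with the above $\lambda$. As this $\lambda$ is a global quantity independent of $C$, the same multiplier serves on every component---precisely the feature that separates $L^2$--critical points from the component-wise steady states of Lemma~\ref{lem:steady}. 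To reach the zero set and the contact angle I probe the boundary of the cone with the \emph{even} curves $\gamma(\eps)=\frac{M}{M^\eps}(u+\eps^2\phi)$, $\phi\ge0$. These are differentiable in $L^2$ with $\gamma(0)=u$, and because they are even in $\eps$ they explore only the direction $+\phi$; expanding to second order in $\eps$ (the normalization replaces $\phi$ by the mean-zero direction $\phi-\frac{1}{M}\bigl(\int_\Omega\phi\,dx\bigr)u$) and using $DE(u)[u]=-\lambda M$, the critical-point inequality yields the \emph{one-sided} conclusion $DE(u)[\phi]+\lambda\int_\Omega\phi\,dx\ge0$ for every $\phi\ge0$. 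This is the distributional variational inequality $u_{xx}+\alpha^2u+\cos x\le\lambda$ on all of $\Omega$, from which the concavity argument of Lemma~\ref{lem:EL} yields $u\in{\cal C}^{1,1}$ and $u_x=0$ on $\partial P(u)$.

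For the converse I assume~(\ref{eq:EL}) on each component with one common $\lambda$ and $u=u_x=0$ on $\partial P(u)$, and I bound $E(u+h)-E(u)$ from below along an arbitrary two-sided curve, with $h=\gamma(\eps)-u$ (if $h\notin H^1$ the inequality is trivial). Integrating the linear term by parts on each component, the boundary contributions vanish since $u_x=0$ at the contact points, and~(\ref{eq:EL}) together with $\int_\Omega h\,dx=0$ collapses it to $DE(u)[h]=\int_{Z(u)}(\lambda-\cos x)\,h\,dx$. On $Z(u)$ positivity of the curve gives $h\ge0$, while $\lambda\ge\cos x$ there holds at the contact points because $u_{xx}(\tau)=\lambda-\cos\tau\ge0$ by one-sided positivity and propagates across the dry region through the variational inequality; hence the linear term is nonnegative. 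The quadratic remainder is at least $-\frac{\alpha^2}{2}\|h\|_{L^2}^2=-o\bigl(\|h\|_{L^2}\bigr)$, so $E(u+h)-E(u)\ge-o\bigl(\|\gamma(\eps)-u\|_{L^2}\bigr)$, which is exactly the critical-point inequality.

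The step I expect to be the real obstacle is the zero contact angle in the forward direction. Ordinary two-sided variations have tangents that vanish almost everywhere on $Z(u)$---a nonnegative curve through $0$ has a one-signed difference quotient from each side---so linear perturbations see nothing outside $P(u)$, and the minimality argument of Lemma~\ref{lem:EL} is unavailable because a general critical point need not be a minimizer. The even-curve device is what rescues this: it extracts a genuine one-sided inequality from the $L^2$--critical-point definition and so produces the full variational inequality, whence both $\lambda\ge\cos x$ on $Z(u)$ and the vanishing contact angle. A secondary point needing care is the integration by parts and the $H^3_{loc}(P(u))$ regularity, which follow by bootstrapping~(\ref{eq:EL}) as in Lemma~\ref{lem:steady}.
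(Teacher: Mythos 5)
Your forward implication is correct, and it takes a genuinely different route from the paper's. After the interior variations (identical to the paper's first step), the paper proceeds by proving Lipschitz regularity and existence of one-sided derivatives at zeroes, then uses inner (transport) variations $u^\eps(x)=\frac{M}{M^\eps}u(x+\eps\phi(x))$ to show $|u_x(\tau_-)|=|u_x(\tau_+)|$ at each contact point, and finally rules out the symmetric-corner case $u_x(\tau_+)=-u_x(\tau_-)>0$ with a non-smooth ``clipping'' variation $\psi^\eps(u)$. Your even-curve device $\gamma(\eps)=\frac{M}{M^{\eps^2}}(u+\eps^2\phi)$ with $\phi\ge 0$ replaces all of that machinery: the curve is admissible and differentiable with $\gamma'(0)=0$ and $\|\gamma(\eps)-u\|_{L^2}\sim c\,\eps^2$, so criticality forces the coefficient of $\eps^2$ to be nonnegative, which is exactly the distributional inequality $u_{xx}+\alpha^2u+\cos x\le\lambda$ on all of $\Omega$; the concavity argument of Lemma~\ref{lem:EL} then gives the zero contact angles. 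This is cleaner than the paper's argument and delivers strictly more: it shows that every $L^2$-critical point satisfies the variational inequality, hence $\lambda\ge\cos x$ on $Z(u)$, a fact the paper's forward proof never establishes.

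The converse, however, has a genuine gap, located exactly where you put the weight of the argument. You need the linear term $\int_{Z(u)}h\,(\lambda-\cos x)\,dx$ to be nonnegative, and for this you claim $\lambda\ge\cos x$ on $Z(u)$: correct at the contact points (where $u_{xx}(\tau_\pm)=\lambda-\cos\tau\ge0$), but then asserted to ``propagate across the dry region through the variational inequality.'' In the converse direction the variational inequality is not a hypothesis; by your own forward argument it is equivalent (given the other hypotheses) to the criticality you are trying to prove, so this step is circular. Nor does the propagation follow locally: $\cos x$ attains its maximum at $x=0$, so on a dry interval whose interior contains the bottom of the cylinder, $\cos x$ strictly exceeds its boundary values $\cos\tau$, and nothing in ``Eq.~(\ref{eq:EL}) on $P(u)$ with a common $\lambda$ plus zero contact angles'' forces $\lambda\ge1$. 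This configuration actually occurs: the sitting drops of Section~\ref{sec:steady} (type~III, $\alpha>1$) are precisely steady states whose dry region contains $x=0$, and for them the needed bound $\lambda\ge 1$ is a nontrivial quantitative fact requiring a separate proof.

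The paper's converse avoids the sign of $\lambda-\cos x$ entirely. It exploits that competitors are differentiable curves into the cone ${\cal C}_M$: since $\gamma(\eps)\ge 0=u$ on $Z(u)$ for both signs of $\eps$, the tangent $\gamma'(0)$ vanishes almost everywhere on $Z(u)$, so the restriction of $h=\gamma(\eps)-u$ to $Z(u)$ is $o(\eps)$ in $L^2$ and the linear term is $o(\eps)$ in absolute value, whatever its sign. You record this tangent-vanishing fact yourself in your closing paragraph, but use it only to motivate the even curves; it is what rescues the converse, and you should substitute it for your sign argument. (One caveat worth recording: this repair controls only curves with $\|\gamma(\eps)-u\|_{L^2}$ comparable to $\eps$; for degenerate curves such as your own even curves, $o(\eps)$ is weaker than $o(\|\gamma(\eps)-u\|_{L^2})$, so a fully airtight converse does ultimately need $\lambda\ge\cos x$ on $Z(u)$ --- i.e.\ the sitting-drop fact --- a point on which the paper's own proof is also silent.)
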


\begin{proof} Suppose that $u\in {\cal C}_M$
solves Eq.~(\ref{eq:EL}) on $P(u)$,
and that $u=u_x=0$ on $\partial P(u)$. 
For $v\in {\cal C}_M$, we compute the directional derivative
\begin{eqnarray*}
\frac{d}{ds} 
E((1-s)u + sv) \Big\vert_{s=0^+}
&=& \int_\Omega \bigl\{ u_{x}\cdot (v-u)_x  -
(\alpha^2 u +\cos x)\cdot (v-u)\bigr\}
 \, dx\\
&=&- \int_{P(u)} \lambda \cdot (v-u)\, dx - 
\int_{Z(u)} \cos x \cdot (v-u)\, dx\\
&=& \int_{Z(u)} v\cdot(\lambda-\cos x) \,dx\,.
\end{eqnarray*}
Since $E$ agrees with its second order Taylor expansion about
$u$, it can be written as
\begin{equation} \label{eq:Taylor}
E(v)= E(u) + \int_{Z(u)} \!\! v\cdot (\lambda\!-\!\cos x)\, dx
+ \frac{1}{2} \int_{\Omega} \!
\bigl\{(v_x\!-\!u_x)^2-\alpha^2 (v\!-\!u)^2\bigr\}\, dx\,.
\end{equation}
Let $\gamma:(-\eps_0,\eps_0)\to {\cal C}_M$ be a differentiable
curve through $u$ in $L^2$.
Writing $\gamma(\eps)=u + \eps\gamma'(\eps) + o(\eps)$ in 
$L^2$, we see that the nonnegativity of
$\gamma(\eps)$ implies that $\gamma'(\eps)$ vanishes
almost everywhere on the zero set $Z(u)$.
By Eq.~(\ref{eq:Taylor}),
$$
E(\gamma(\eps)) - E(u) \ge \int_{Z(u)} \gamma(\eps)
\cdot (\lambda-\cos x)\, dx
- \frac{1}{2} \int_{\Omega} 
\alpha^2 (\gamma(\eps)-u)^2\, dx = o(\eps)\,,
$$
showing that $u$ is an $L^2$-critical point.

\smallskip
Conversely, assume that $u$ is an $L^2$-critical point.
By considering $\gamma(\eps)=\frac{M}{M^\eps}(u+\eps\phi)$, 
where $\phi$ is a smooth function with support in $P(u)$, we see
that $u$ satisfies the Euler-Lagrange equation on $P(u)$.
Since $u\in H^1$, it is continuous
and vanishes on $\partial P(u)$.  We need to show 
that $u_x$ also vanishes on~$\partial P(u)$.

Consider a connected component $C$
of $P(u)$, and let $\ell$ be its length. 
By Rolle's theorem, $u_x$ vanishes
somewhere on $C$, and by the Euler-Lagrange equation,
$\sup_{C} |u_x| \le \ell \cdot (\lambda+1+||u||_\infty)$.  
In particular, $u$ is Lipschitz continuous on $\Omega$.
We claim that $u$ has one-sided derivatives
at every point $\tau$ with $u(\tau)=0$.
If $\tau$ is the limit of an increasing sequence of zeroes of $u$,
then it follows from the above estimate that $u_x(\tau_-)=0$.
Otherwise, if $\tau$ lies on the right
boundary of a component $C$, its left derivative exists
because $u$ solves the Euler-Lagrange equation on $C$.
Similarly, $u_x(\tau_+)$ exists, and vanishes unless
$\tau$ is the left endpoint of a component of $P(u)$.

Let $\phi$ be a smooth test function
on $\Omega$, and consider variations of the form
$$
u^\eps(x)= \frac{M}{M^\eps} u (x + \eps\phi(x))\,,
\qquad M^\eps = \int_\Omega u(x +\eps\phi(x))\, dx\,.
$$
Since $u\in H^1$, the curve $\gamma:u\mapsto u^\eps$ is differentiable in 
$L^2$ with $\gamma'(0)=\phi u_x$. 
We compute with the chain rule
$$
\frac{d}{d\eps} E(u^\eps)\Big\vert_{\eps=0}
= \int_\Omega \phi_x \cdot\Bigl(
\frac{1}{2} u_x^2 +\frac{1}{2} \alpha^2 u^2 + u\cos x\Bigr)\, dx
- \int_\Omega \phi u \sin x\, dx -\lambda \frac{d}{d\eps} 
M^\eps \Big\vert_{\eps=0}\,,
$$
where 
$$
\lambda= - \frac{1}{M} \Bigl(2 E(u) + \int_\Omega u\cos x\,dx\Bigr)\,,
\quad 
\frac{d}{d\eps} M^\eps\Big\vert_{\eps=0} = \int_\Omega \phi u_x \,.
$$
Setting the first variation equal to zero 
yields the (weak) Beltrami identity  associated with Eq.~(\ref{eq:EL})
(see, for example~\cite[Theorem 2.8]{Dac}).
We next write $P(u)$ as the union of its connected components $C_j$
and integrate the first integral by parts on each component. 
(The number of components may be finite or countable.)
Using that $u$ satisfies Eq.~(\ref{eq:EL}) and vanishes on $\partial P(u)$,
we obtain 
$$
\frac{d}{d\eps} E(u^\eps)\Big\vert_{\eps=0}
= \frac{1}{2} \sum_j \phi  u_x^2 \Big\vert_{\partial C_j}\,.
$$
Since $u$ is an $L^2$-critical point,
this expression vanishes for every smooth function
$\phi$ on $\Omega$.  By concentrating  $\phi$ at one point 
$\tau\in \partial P(u)$, we conclude that 
$|u_x(\tau_-)|=|u_x(\tau_+)|$.

It remains to show that $u_x(\tau_+)=0$.
Suppose for the contrary that
$u_x(\tau_+)=-u_x(\tau_-)= a>0$.
Then $\tau$ is the common boundary point of two components
of $P(u)$. Let $I$ be an open interval that contains
$\tau$ but no other zeroes  of $u$, and consider the variation
$$
u^\eps(x)= 
\left\{ \begin{array}{ll} 
\frac{M}{M^\eps} 
\psi^\eps(u(x))\,,\quad &x\in I\,,\\[0.2cm]
\frac{M}{M^\eps} u(x)\,, & \mbox{otherwise}\,,
\end{array}\right.
\qquad M^\eps= \int_{I^c} u(x)\, dx +
\int_I \psi^\eps(u(x))\, dx\,,
$$
where $\eps < \min_{x \in \partial I} u(x)$, and
$\psi^\eps$ is defined for $y\in [0,\infty)$ by
$$
\psi^\eps(y) =
\left\{ \begin{array}{ll} 
y\,, & y\ge |\eps|\,,\\
\eps\,,\quad            &\eps>  0, \, y\le\eps, \\
\max\{2y +\eps,0\} \,, \quad & \eps < 0, \, y\le -\eps\,.
\end{array}
\right.
$$
Then $\gamma: \eps\mapsto u^\eps$
defines a  curve in ${\cal C}_M$ that is differentiable 
in $L^2$ with $\gamma(0)=u$, and we have 
$$
\gamma'(0)=0\,,\quad 
\frac{d}{d\eps} E(\gamma(\eps))\Big\vert_{\eps=0} 
= \frac{a}{2}\,,
$$
contradicting the assumption that $u$ is an $L^2$-critical point.
\end{proof}

\bigskip Combining Lemma~\ref{lem:steady} with Lemma~\ref{lem:critical},
we see that the profile of a steady state
where the dissipation vanishes is the sum of
profiles of critical points of the energy
whose supports are mutually disjoint.
We next show that all these profiles are symmetric.

\begin{lemma} [Symmetry.]
\label{lem:symmetry} If $u$ is a positive solution
of Eq.~(\ref{eq:EL}) on some interval $C$ with boundary values
$u=u_x=0$, then it is symmetric about $x=0$.
\end{lemma}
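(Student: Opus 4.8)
The plan is to exploit the only symmetry of Eq.~(\ref{eq:EL}), namely invariance under $x\mapsto -x$ (the forcing $\cos x$ is even), in order to pin the centre of the bump to $0$ or $\pi$. Write $C=(m-\tau,m+\tau)$ with midpoint $m$ and half-width $\tau$, regard $u$ as a function of $\xi=x-m$ on the symmetric interval $(-\tau,\tau)$, and split it into its even and odd parts $E,O$ in $\xi$. Since $\cos(\xi+m)=\cos m\cos\xi-\sin m\sin\xi$ has even part $\cos m\cos\xi$ and odd part $-\sin m\sin\xi$, the odd part obeys $O''+\alpha^2O=\sin m\,\sin\xi$ while $E$ carries the even forcing. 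Crucially, the four boundary data $u=u_\xi=0$ at the symmetric points $\xi=\pm\tau$ decouple by parity into $E(\tau)=E'(\tau)=0$ and $O(\tau)=O'(\tau)=0$. Thus symmetry about $x=0$ reduces to showing $O\equiv0$ together with $\sin m=0$.

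To do this I would solve for $O$ explicitly. For $\alpha\ne1$, $O(\xi)=\frac{\sin m}{\alpha^2-1}\sin\xi+B\sin(\alpha\xi)$, and imposing $O(\tau)=O'(\tau)=0$ yields a homogeneous $2\times2$ system in the unknowns $\frac{\sin m}{\alpha^2-1}$ and $B$ with determinant
\[
\Delta(\tau)=\alpha\sin\tau\cos(\alpha\tau)-\cos\tau\sin(\alpha\tau).
\]
If $\Delta(\tau)\neq0$ the only solution is $\sin m=0$ and $B=0$, so $O\equiv0$ and $u$ is even about $m$ with $m\in\{0,\pi\}$. On $\Omega=\RR/(2\pi\ZZ)$ reflection about $\pi$ coincides with reflection about $0$, so in either case $u(x)=u(-x)$, which is the claim. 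The resonant case $\alpha=1$ is handled identically, using the particular solution $O(\xi)=-\tfrac{\sin m}{2}\,\xi\cos\xi+B\sin\xi$; the corresponding determinant is $\tfrac12\sin(2\tau)-\tau$.

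It remains to verify the nonvanishing of the determinant, and here $\tau<\pi$ holds automatically because $C$ is a proper arc. The convenient device for $\alpha\ne1$ is $\Delta(0)=0$ together with $\Delta'(\tau)=(1-\alpha^2)\sin\tau\sin(\alpha\tau)$, which writes $\Delta$ as a one-signed integral once the sign of the integrand is controlled. For $\alpha<1$ the integrand is positive on $(0,\tau)$ (as $\alpha\tau<\tau<\pi$), so $\Delta>0$; and for $\alpha=1$ one has $\tfrac12\sin(2\tau)-\tau<0$ for all $\tau>0$ directly. Hence for $\alpha\le1$ the argument is complete without any further input.

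I expect the main obstacle to be the case $\alpha>1$: here $\sin(\alpha\tau)$ can change sign inside $(0,\tau)$, so $\Delta$ could a priori vanish, and at such a degeneracy the odd part need not vanish and symmetry could fail. The crux is therefore to show that positivity of the bump keeps $\tau$ in the range where $\Delta\neq0$ --- morally $\alpha\tau<\pi$. I would try to extract this width bound from an oscillation argument applied to $v=u_x$, which solves $v''+\alpha^2v=\sin x$ and vanishes at both endpoints of $C$ as well as at the interior maximum of $u$; an interval that is too wide would force $v$ into an extra sign change incompatible with $u>0$. Feeding the resulting bound into the sign analysis of $\Delta$ would then close the argument.
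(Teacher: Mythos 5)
Your parity decomposition is sound, and for $\alpha\le 1$ it gives a complete proof by a genuinely different route from the paper's: the paper never splits $u$ into even and odd parts, but instead eliminates $A,B$ from the four boundary conditions to show that the cosines of the two contact points solve a single quadratic, Eq.~(\ref{eq:quadratic}), and then combines Vieta's theorem with the endpoint inequality $\lambda\ge\cos\tau_j$ (which holds because $u>0$ inside and $u=u_x=0$ at the endpoints force $u_{xx}\ge0$ there). In the range $\alpha\le 1$ your determinant computation is correct and self-contained, and needs no positivity input at all. The problem is that the lemma is needed for every $\alpha>0$: Theorem~\ref{thm:unique} and the classification of steady states invoke it precisely when $\alpha>1$, where sitting drops and two-droplet states occur, and there your proof has a genuine gap, which you acknowledge --- and the repair you propose cannot work.

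First, the hoped-for width bound $\alpha\tau<\pi$ is false for positive solutions, so no oscillation argument on $v=u_x$ can deliver it. The sitting drops of Eq.~(\ref{eq:u-saddle}) are among the objects the lemma must cover. Take $\alpha=2$, write $y=x-\pi$ and let $w$ be the half-width; imposing the Euler--Lagrange equation with $u=u_y=0$ at $y=\pm w$ gives $u(y)=-\bigl(\cos y-\cos w\bigr)^2/(6\cos w)$, which is positive on $(-w,w)$ exactly when $\cos w<0$, i.e.\ $w>\pi/2$. Thus every $\alpha=2$ sitting drop satisfies $\alpha w>\pi$. (For these particular drops your determinant happens to be $\Delta(w)=-2\sin^3 w\ne 0$, so the conclusion survives, but the route through $\alpha\tau<\pi$ is dead.) Second, and more fundamentally, $\Delta$ really does vanish at admissible widths when $\alpha>1$: for instance $\alpha=3$, $\tau=\pi/2$ gives $\Delta(\tau)=3\cos(3\pi/2)-0=0$, and there the odd problem admits the nontrivial solution $O=c\,(\sin\xi+\sin 3\xi)$, compatible with both boundary conditions, for every midpoint $m$ (with $c=\sin m/8$). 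At such degenerate widths your homogeneous system has a kernel, so linear algebra alone cannot exclude a nonsymmetric positive solution; some positivity input at the contact points is indispensable. That is exactly what the paper's argument supplies and your proposal omits: $u_{xx}\ge0$ at both endpoints yields $\lambda\ge\cos(m\pm\tau)$, while eliminating the free constants shows that $\cos(m+\tau)$ and $\cos(m-\tau)$ are roots of the same quadratic~(\ref{eq:quadratic}) with root sum $2\lambda$; together these force $\cos(m+\tau)=\cos(m-\tau)$, i.e.\ $\sin m\,\sin\tau=0$, hence $\sin m=0$. To salvage your approach for $\alpha>1$ you would need to add this endpoint-positivity step (or an equivalent) to dispose of the degenerate widths; a width bound cannot replace it.
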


\begin{proof} 
Let $u$ be a positive solution
of Eq.~(\ref{eq:EL}) on $C=(\tau_1,\tau_2)$.
Then $u$ is given by Eq.~(\ref{eq:EL-sol}).
For $\alpha =1$, the boundary conditions $u(\tau_j)=u_x(\tau_j)=0$ read
\begin{eqnarray*} 
A \cos(\tau_j) + B \sin(\tau_j) + 
\lambda - \frac{1}{2} x \sin(\tau_j) &=& 0\,,\\
-A \sin(\tau_j) + B \cos(\tau_j) - \frac{1}{2} \sin(\tau_j) - 
\frac{1}{2} x \cos(\tau_j)&=& 0\,.
\end{eqnarray*}
After eliminating $B$,
we see that $\cos\tau_j$ for $j=1,2$ both solve the quadratic equation
\begin{equation} \label{eq:quadratic}
x^2 -2\lambda x -c=0\,,
\end{equation}
where the constant $c$ is determined by  and $A$ and $\lambda$.  
For $\alpha\ne 1$, 
the boundary conditions can be expressed in the single complex equation
$$
e^{-i\alpha\tau_j}(A+iB) = 
-\frac{\lambda}{\alpha^2} - \frac{1}{1 - \alpha^2} \cos(\tau_j)
+ \frac{i}{\alpha(1 - \alpha^2)} \sin(\tau_j)\,. $$
By considering the square modulus $A^2+B^2$,
we see again that $\cos \tau_j$
for $j=1,2$ both solve the quadratic equation
Eq.~(\ref{eq:quadratic}), with a constant that depends
on $A$, $B$, $\alpha$, and $\lambda$.

By Vieta's theorem, the two roots of Eq.~(\ref{eq:quadratic}) add
up to $2\lambda$. Since $\lambda\ge\cos\tau_j$ 
by Lemma~\ref{lem:EL}, we conclude that $\cos \tau_1= \cos\tau_2$.
This leaves two scenarios: The first is that
$C=(-\tau,\tau)$ for some $\tau\in (0,\pi)$, 
and $u$ is given by Eq.~(\ref{eq:u}).
The other scenario is that $C=\Omega\setminus [-\tau,\tau]$
for some  $\tau\in [0,\pi)$, and  $u$ is the
$2\pi$-periodic  continuation of
\begin{equation}\label{eq:u-saddle}
u(x) = u^0(x) + A\cos(\alpha(x+\pi))-
u^0(\tau)- A \cos(\alpha(\tau+\pi))\,,\quad \tau<x<2\pi-\tau\,,
\end{equation}
with coefficient 
$A(\tau)=\frac{u^0_x(\tau)}{\alpha \sin(\alpha(\pi+\tau))}$.
In both cases, $u$ is symmetric about $x=0$.
\end{proof}

\begin{figure}[t]
\begin{center}
\includegraphics[height= 4 cm] {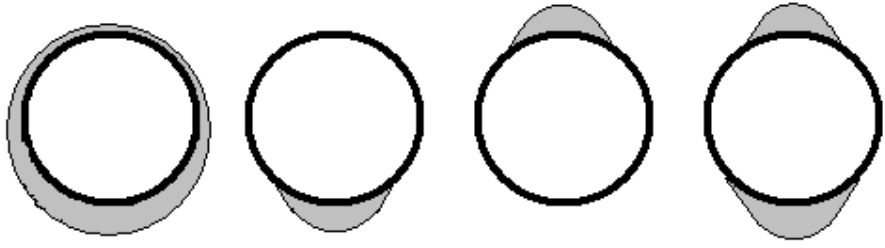}
\end{center}
\label{fig:types}
\caption{\small Types of steady states with zero dissipation 
({\em from left to right}): Smooth film, hanging drop, sitting drop,
and two-droplet steady state.  If $\alpha=1$, the unique
steady state is the energy minimizer,
which has the shape of a hanging drop. If $\alpha<1$, 
the unique energy-minimizing steady state can take 
the shape of a hanging drop (for small values of the mass) or a
smooth film (for larger values of the mass). If $\alpha>1$ 
the energy minimizer is  always a hanging drop. For larger
values of the mass, there are multiple steady states that
include sitting drops and two-droplet steady states.}
\end{figure}

\bigskip 
We conclude from Lemmas~\ref{lem:steady}-\ref{lem:symmetry}
that Eq.~(\ref{eq:PDE}) has four types of strong 
steady states with zero dissipation, see Figure~4.
The first three are $L^2$-critical points 
of the energy, but the last generally is not. 
While critical points are
typically isolated, the fourth type of steady states
can form a one-parameter continuum in ${\cal C}_M$.

\begin{itemize}
\item[I.] {\em Smooth films} are 
positive except possibly for touchdown zeroes. 
They occur as energy minimizers 
if  $\alpha<1$ with $M(1-\alpha^2)\ge 2\pi$,
and as saddle points if $\alpha>1$ with
$M(\alpha^2-1)\ge 2\pi$.
If $\alpha$ is not an integer, they are
symmetric about $x=0$ and given by Eq.~(\ref{eq:u-positive}).
If $\alpha$ is an integer $k>1$ and $M(k^2-1)>2\pi$, 
there are also non-symmetric solutions of the form
$$
u(x)= \frac{M}{2\pi} - \frac{1}{k^2-1} \cos{x}
+ A\cos{k x} + B\sin{kx}\,
$$ 
with $A$ and $B$ small enough.
\item[II.] {\em Hanging drops} are 
given by Eq.~(\ref{eq:u}) on an interval
$(-\tau,\tau)$ with $0<\tau<\pi$,
and have a dry region at the top of the cylinder.
Among them are the energy minimizers
for $\alpha<1$ with $M(1-\alpha^2)<2\pi$, as well as for $\alpha\ge 1$.
\end{itemize}
The other two types occur only for $\alpha>1$:
\begin{itemize}
\item[III.] {\em Sitting drops} are given by Eq.~(\ref{eq:u-saddle})
on an interval $(\tau, 2\pi-\tau)$ with $0<\tau<\pi$, and have a dry region
at the bottom.
They are $L^2$-critical points, but never minimizers of the energy.
\item[IV.]  {\em Two-droplet steady states} are the sum
of a hanging and a sitting drop whose positivity
sets are disjoint.  They do not correspond to $L^2$-critical 
points of the energy unless the value of $\lambda$ 
from Eq.~(\ref{eq:EL}) agrees in the two droplets.
\end{itemize}

The next theorem is illustrated in Figure~5.

\begin{theorem}[Uniqueness of steady states with zero dissipation.]
\label{thm:unique}
Let $M>0$ and $\alpha>0$ be given.
If $\alpha\le 1$, then 
the global minimizer $u^*$ is the unique strong steady 
state of Eq.~(\ref{eq:PDE}) on ${\cal C}_M$ with zero dissipation.
For $\alpha>1$, there exists for each $M>0$
an energy level $E_1> E(u^*)$
such that $u^*$ is the unique strong steady state
with zero dissipation in the sub-level set 
$\{u\in {\cal C}_M\mid E(u)<E_1\}$.
\end{theorem}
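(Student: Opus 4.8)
The plan is to turn Lemmas~\ref{lem:steady}--\ref{lem:symmetry} into a complete list of the zero-dissipation steady states, and then to separate $u^*$ from the rest by controlling the value of a steady state at the top of the cylinder, $x=\pi$. By Lemma~\ref{lem:steady} a strong steady state with zero dissipation solves the Euler--Lagrange equation~(\ref{eq:EL}) on each component of its positivity set (with a possibly component-dependent multiplier) and meets the dry region at zero angle; by Lemma~\ref{lem:symmetry} each such component is symmetric about $x=0$, hence is either a hanging drop on an interval $(-\tau,\tau)$ or a sitting drop on $(\tau,2\pi-\tau)$. Since two disjoint symmetric intervals can only consist of one of each, every zero-dissipation steady state is a smooth film, a single hanging drop, a single sitting drop, or a hanging-plus-sitting two-droplet state. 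Among these, the single hanging drops are exactly the profiles of Theorem~\ref{thm:min}: for each mass there is only one, and it is the minimizer $u^*$. Everything therefore reduces to the sitting drops, the two-droplet states, and (for $\alpha>1$) the smooth films, all of which must be separated from $u^*$.

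The central step is an explicit analysis of a single sitting drop. Such a drop is the solution of~(\ref{eq:EL}) that is even about $x=\pi$ on $(\pi-\sigma,\pi+\sigma)$, given by~(\ref{eq:u-saddle}); after imposing $u=u_x=0$ at the endpoints it is determined by its half-width $\sigma\in(0,\pi)$. I would compute the value at the top, $u(\pi)$, which is the maximum of the drop and hence must be positive for the drop to exist. For $\alpha=1$ this value simplifies to $\frac{(1-\cos\sigma)(\sin\sigma-\sigma)}{2\sin\sigma}$, which is strictly negative for every $\sigma\in(0,\pi)$, and the same sign persists for all $\alpha\le1$. For $\alpha>1$ the coefficient of the homogeneous mode is singular at $\sigma=\pi/\alpha$, and one finds $u(\pi)<0$ for $\sigma<\pi/\alpha$ but $u(\pi)>0$ for the admissible range $\sigma>\pi/\alpha$; moreover, over that range $u(\pi)$ stays bounded below by a positive constant $h_{\min}=h_{\min}(\alpha,M)$.

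For $\alpha\le1$ this shows that no positive sitting drop exists; consequently there are no two-droplet states either, and the only zero-dissipation steady states are the hanging drops and films of Theorem~\ref{thm:min}, all equal to $u^*$. This proves the first assertion. For $\alpha>1$ every sitting drop, hence every single sitting drop or two-droplet state, satisfies $u(\pi)\ge h_{\min}>0$; the smooth-film saddle points~(\ref{eq:u-positive}) have $u(\pi)=\frac{M}{2\pi}+\frac{1}{\alpha^2-1}>0$ as well. Thus every non-minimizing steady state has $u(\pi)$ bounded below by some $h>0$, whereas the minimizer $u^*$ is a hanging drop with $u^*(\pi)=0$. I would then set
$$
E_1=\inf\bigl\{E(u)\mid u\in{\cal C}_M,\ u(\pi)\ge h\bigr\}
$$
and show $E_1>E(u^*)$: by the coercivity estimate~(\ref{eq:energy-bound}) a minimizing sequence is bounded in $H^1$, so a subsequence converges weakly in $H^1$ and, by the compact one-dimensional embedding $H^1\hookrightarrow C^0$, uniformly; the limit has value at least $h>0$ at $x=\pi$ and, by weak lower semicontinuity, energy at most $E_1$, so it cannot be the unique minimizer $u^*$. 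Since every non-minimizing steady state lies in this set, none has $E<E_1$, which is the second assertion.

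The main obstacle is the sitting-drop computation above: pinning down the sign of $u(\pi)$ and the threshold $\sigma=\pi/\alpha$, and verifying the uniform lower bound $h_{\min}$ for $\alpha>1$. A tempting shortcut for $\alpha\le1$---invoking the strict convexity of $E$ to conclude that its only critical point is $u^*$---does not work: by Lemma~\ref{lem:critical} the $L^2$-critical point condition tests only two-sided variations, whose derivatives must vanish on the zero set, so a sitting drop can be an $L^2$-critical point while still admitting a one-sided descent direction into the dry region. Convexity therefore cannot by itself exclude sitting drops, and the quantitative positivity computation seems unavoidable. A minor additional point for $\alpha>1$ is to confirm that the hanging and sitting supports can be made disjoint so that the two-droplet family is genuinely present; this affects only the existence of those states, not the separation argument, which uses only the universal lower bound on $u(\pi)$.
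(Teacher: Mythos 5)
Your proposal follows the paper's skeleton up to the classification step: both arguments use Lemmas~\ref{lem:steady} and~\ref{lem:symmetry} to reduce the problem to smooth films, hanging drops, sitting drops, and two-droplet states. After that the routes diverge, and the two halves of your argument fare differently. For $\alpha\le 1$ your route is correct and genuinely different. The paper eliminates sitting drops in one line: by convexity of $E$ on ${\cal C}_M$, every critical point is the unique minimizer, so no component centered at $x=\pi$ can occur. You instead compute the center value of a putative sitting drop; your formula $\frac{(1-\cos\sigma)(\sin\sigma-\sigma)}{2\sin\sigma}<0$ for $\alpha=1$ checks out, and for $\alpha<1$ the sign does persist, by the monotonicity of $t\mapsto \tan t/t$. (A softer variant of the same idea: if a sitting drop and the positive hanging drop of equal half-width $\sigma$ both existed, their sum, after translating one by $\pi$, would solve $z_{xx}+\alpha^2z=\mathrm{const}$ with the four conditions $z=z_x=0$ at $\pm\sigma$, forcing $z\equiv 0$ and contradicting positivity.) However, your stated reason for rejecting the paper's convexity shortcut is not correct. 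In the paper's definition the error term is $o\bigl(\|\gamma(\eps)-u\|_{L^2}\bigr)$, not $o(\eps)$; consequently the reparametrized segments $\gamma(\eps)=u+\eps^2(v-u)$, which are admissible two-sided curves for every $v\in{\cal C}_M$, do test one-sided directions, and criticality along them combined with convexity gives $E(v)\ge E(u)$ directly. What you have actually detected is an internal ambiguity in the paper: its proof of Lemma~\ref{lem:critical} only verifies an $o(\eps)$ bound, and its assertion that sitting drops are $L^2$-critical points for $\alpha>1$ is consistent only with that weaker reading. The subtlety is real, but the quantitative computation is not ``unavoidable.''

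For $\alpha>1$ there is a genuine gap beyond the one you flag. Your separation rests on the claim that every non-minimizing zero-dissipation steady state satisfies $u(\pi)\ge h>0$, which in turn presumes that every hanging drop is one of the minimizing profiles of Theorem~\ref{thm:min}. But Theorem~\ref{thm:min} characterizes only the symmetric-\emph{decreasing} drops, with $\alpha\tau<\pi$; Lemma~\ref{lem:symmetry} gives symmetry, not monotonicity, and for large $\alpha$ it does not exclude positive, non-monotone hanging drops with $\alpha\tau>2\pi$ (the mode $\cos(\alpha x)$ can oscillate several times across the support). Any such drop would be a non-minimizing steady state with $u(\pi)=0$, invisible to your constraint set $\{u(\pi)\ge h\}$, so your $E_1$ would not exclude it. In addition, the uniform bound $h_{\min}$ has to hold for sitting components of two-droplet states of every admissible mass in $(0,M)$, and for the non-symmetric integer-$\alpha$ films; ruling out a sequence of such states whose values at $\pi$ tend to $0$ requires a compactness argument on the family of steady states. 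That is essentially the paper's mechanism: $u^*$ is isolated among zero-dissipation steady states, and any sequence of steady states with energies decreasing to $E(u^*)$ would converge to $u^*$ in $H^1$ by the coercivity of Lemma~\ref{lem:coerce}, a contradiction that produces the level $E_1$ without any reference to $u(\pi)$. To repair your $\alpha>1$ argument you would need either to prove that no positive hanging drop with $\alpha\tau\ge\pi$ exists, or to fall back on this isolation-plus-coercivity route, at which point the $u(\pi)$-separation no longer carries the proof.
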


\begin{figure}[t]
\begin{center}
\includegraphics[height= 5 cm] {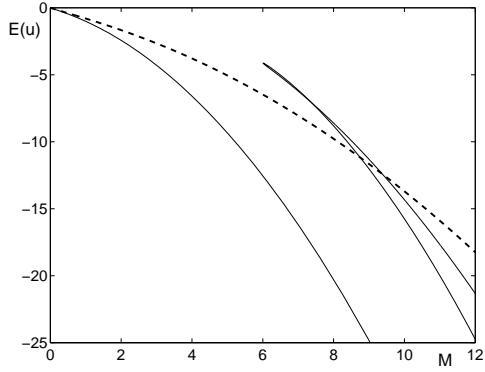}
\end{center}
\caption{\small Energy levels of critical points.
Dashed line: $\alpha=1$. For each given mass, the
energy minimizer is the unique critical point.
Its energy decreases strictly with
the mass. Solid line: $\alpha=\sqrt{2}$. If the mass exceeds a certain
threshold, there is in addition to the global minimizer 
a pair of saddle points. Note that the gap between 
the minimal energy and the energy of the saddles appears to be 
minimal when the saddles first appear, at $M=2\pi$.
Although the energy levels of the two branches cross near $M=8$, 
this is not a bifurcation point.}
\end{figure}

\begin{proof} By Lemmas~\ref{lem:steady}-\ref{lem:symmetry},
every strong steady state where the dissipation vanishes
is the sum of one or two critical points with disjoint 
positivity sets. For $\alpha\le 1$, the convexity of the
energy implies that the unique minimizer $u^*$
(determined in Theorem~\ref{thm:min}) is the unique critical point.
For $\alpha>1$, we use that the minimizer is unique and given 
by a hanging drop, and then use
Lemma~\ref{lem:symmetry} to see that it is isolated
within the set of steady states with zero dissipation.
\end{proof}

\section{Convergence to equilibrium}
\label{sec:conv}

In this section we will prove our main result, that global strong solutions 
of Eq.~(\ref{eq:PDE}) with $\alpha\le 1$ converge strongly in $H^1$
to the unique energy minimizer
of the same mass. We have already described these
steady states in the previous sections.

As described in the introduction, a global strong solution
of Eq.~(\ref{eq:PDE})
is a nonnegative function in $L^2_{loc}((0,\infty),H^2(\Omega)\big)$
that satisfies Eq.~(\ref{eq:BF-strong})
for every smooth test function with compact support 
in $(0,\infty)\times \Omega$.  We consider only strong solutions that 
additionally satisfy a linear bound on the  $H^2$-norm,
\begin{equation} \label{eq:global-strong}
\int_0^T\int_\Omega u_{xx}^2\, dx dt \le 
A + B T\,
\end{equation}
with some constants $A,B$, and the 
{\em energy inequality}
\begin{equation} \label{eq:energy-integrated}
E(u(\cdot, T)) + \int_0^T\int_{P(u)} u^n 
\bigl(u_{xxx}+\alpha^2 u_x 
 - \sin x \bigr)^2 
\, dx dt \le E(u(\cdot,0)) \,.
\end{equation}
To convince the reader that this class of solutions is not empty, we 
paraphrase the existence theory of Bernis and Friedman for $n>2$.
We note in passing that the method has been extended 
to the entire range $n>0$, and that it implies much 
stronger existence and regularity 
results~\cite{B2, BertPugh1996, Report}.  

The basic version of the methods exploits that the entropy 
$$
S(u)=\int_\Omega u^{-n+2}\, dx
$$ 
formally decreases under Eq.~(\ref{eq:basic}) with $n>2$.
Note that $S(u)<\infty$  implies that $u$ can vanish 
only on a set of measure zero.
Strictly speaking, $S$ is not an entropy for Eq.~(\ref{eq:PDE}), 
because  it may increase as well as decrease along solutions,
due to the presence of the long-wave instability
and the gravitational drainage term.
These terms can be accommodated with a technique of
Chugunova, Pugh and Taranets~\cite{Report}, as follows.
Along smooth solutions, 
\begin{equation} \label{eq:naked-entropy}
c_n^{-1} \frac{d S(u)}{dt} 
= - \int_\Omega u_{xx}^2\, dx  +\int_\Omega 
\bigl(\alpha^2u_x^2 + u\cos x\bigr)\, dx\,,
\end{equation}
where $c_n=(n-2)(n-1)$.  
Integrating over time and bounding the second integral
with Eq.~(\ref{lem:energy-bound}), one obtains 
\begin{equation} \label{eq:entropy-integrated}
S(u(\cdot, T))+
c_n \int_{0}^{T}\!\! \int_\Omega u_{xx}^2 \, dx \, dt
\le S(u(\cdot, 0)) + KT \,,
\end{equation}
where $K$ depends only on the energy and the mass.
In particular, classical 
solutions satisfy Eq.~(\ref{eq:global-strong}).
A similar computation shows that the energy
inequality~(\ref{eq:energy-integrated})
is an identity for classical solutions.

Strong solutions that satisfy Eqs.~(\ref{eq:global-strong}) 
and (\ref{eq:energy-integrated}) are obtained by regularizing
$$
 u_t + \partial_x \left[f_{\epsilon}(u)\,
\partial_x(u_{xx} + \alpha^2\,u+ \cos x)
\right] = 0\,,\quad x \in \Omega\,,
$$
for example with $f_\eps (z) = z^n + \eps$~\cite{BF}.
The regularized equations
are known to have unique smooth solutions that exist
for all times $t>0$. For these solutions,
the energy inequality~(\ref{eq:energy-integrated}) holds with equality,
and the entropy inequality~(\ref{eq:naked-entropy}) 
holds for the corresponding entropy functional with the 
the same constants $c_n$ and $K$.
By the usual compactness arguments, there exists
a sequence $u^{\eps_j}$ that converges uniformly on compact
time intervals to a nonnegative strong solution $u$ on 
$(0,\infty)\times\Omega$, which is smooth wherever it is positive.
Moreover, $u^{\eps_j}_{xx}$ converges 
to $u_{xx}$ weakly in $L^2(0,T)\times \Omega$ 
for each $T>0$, and $u^{\eps_j}_{xxx}$ converges 
to $u_{xxx}$ weakly in $L^2_{loc}(P(u))$. 
In the limit, the energy and entropy 
inequalities in Eqs.~(\ref{eq:energy-integrated})
and~(\ref{eq:entropy-integrated}) remain intact, because
the double integrals are weakly lower semicontinuous
due to their convexity in the highest derivative.

\bigskip We turn to the long-time behaviour of 
solutions.  Lyapunov's principle says that a
dissipative dynamical system should converge towards the set 
of critical points of the energy. Thin film equations present
two difficulties: Since well-posedness has not been 
settled, we are not sure how to view them as dynamical systems,
and lower bounds on the dissipation are
not easy to obtain.  

The next lemma bounds the distance from the minimizer 
in terms of the energy.  Since energy decreases along solutions
of Eq.~(\ref{eq:PDE}), the lemma implies that implies that 
$u^*$ is dynamically stable in the sense of Lyapunov.

\begin{lemma}[local coercivity.]
\label{lem:coerce}
Let $\alpha>0$ and $M>0$ be given, and let $u^*$ be the
energy minimizer on ${\cal C}_M$ 
obtained in Theorem~\ref{thm:min}.  Then
$$
\sup \left \{ d_{H^1}(u,u^*) \ \Big\vert \ u\in {\cal C}_M,
E(u)\le E(u^*)+\Delta E\right\} \longrightarrow  0\quad (\Delta E \to 0)\,.
$$
For $\alpha<1$, we have the explicit estimate
\begin{equation}\label{eq:coerce}
d_{H^1}(u, u^*) \le \left(\frac{2\Delta E}{1-\alpha^2} \right)^{1/2}\,
\end{equation}
for all $u$ with $E(u)\le E(u^*)+\Delta E$.
\end{lemma}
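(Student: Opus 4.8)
The plan is to split the argument into the convex regime $\alpha<1$, where I will extract the explicit bound Eq.~(\ref{eq:coerce}) directly from an exact Taylor expansion, and the general regime $\alpha>0$, where positive-definiteness of the quadratic form is lost and I will argue by compactness. For $\alpha<1$ I would proceed as follows. Since the minimizer $u^*$ solves the Euler--Lagrange equation on $P(u^*)$ with $u^*=u^*_x=0$ on $\partial P(u^*)$ (Lemma~\ref{lem:EL}), the exact second-order Taylor identity Eq.~(\ref{eq:Taylor}) of Lemma~\ref{lem:critical} applies with critical point $u^*$ and competitor $u\in{\cal C}_M$:
\[
E(u)=E(u^*)+\int_{Z(u^*)} u\,(\lambda-\cos x)\,dx+\frac12\int_\Omega\bigl\{(u_x-u^*_x)^2-\alpha^2(u-u^*)^2\bigr\}\,dx.
\]
The middle term is nonnegative, since $u\ge0$ and $\lambda\ge\cos x$ on $Z(u^*)$ by Lemma~\ref{lem:EL}. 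Writing $v=u-u^*$, which has mean zero because $u$ and $u^*$ share the mass $M$, the Wirtinger inequality $\int_\Omega v^2\,dx\le\int_\Omega v_x^2\,dx$ on the period-$2\pi$ circle gives $\int_\Omega(v_x^2-\alpha^2v^2)\,dx\ge(1-\alpha^2)\,d_{H^1}(u,u^*)^2$. Hence $E(u)-E(u^*)\ge\frac{1-\alpha^2}{2}\,d_{H^1}(u,u^*)^2$, which rearranges to Eq.~(\ref{eq:coerce}) and in particular proves the qualitative claim for $\alpha<1$.

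For general $\alpha>0$ --- in particular $\alpha\ge1$, where the quadratic form above is no longer positive-definite on mean-zero functions --- I would argue by contradiction and compactness. If the supremum failed to vanish, there would be $\delta>0$ and a sequence $u_j\in{\cal C}_M$ with $E(u_j)\to E(u^*)$ but $d_{H^1}(u_j,u^*)\ge\delta$. The coercivity inequality derived in the proof of Lemma~\ref{lem:energy-bound} shows that energy sublevel sets of ${\cal C}_M$ are bounded in $H^1$, so a subsequence of $\{u_j\}$ converges weakly in $H^1$ and strongly in $L^2$ to some $u_\infty\in{\cal C}_M$. Weak lower semicontinuity of $E$ yields $E(u_\infty)\le\liminf_j E(u_j)=E(u^*)$, so $u_\infty=u^*$ by the uniqueness of the minimizer in Theorem~\ref{thm:min}.

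The crux is to upgrade weak $H^1$-convergence to strong. From $u_j\to u^*$ in $L^2$, the lower-order terms $-\frac{\alpha^2}{2}\int_\Omega u_j^2\,dx$ and $-\int_\Omega u_j\cos x\,dx$ of the energy Eq.~(\ref{eq:def-E}) converge to their values at $u^*$; since $E(u_j)\to E(u^*)$, the leading term must converge as well, giving $\|u_{j,x}\|_{L^2}\to\|u^*_x\|_{L^2}$. Together with the weak convergence $u_{j,x}\rightharpoonup u^*_x$ in $L^2$, convergence of the norms forces strong convergence $u_{j,x}\to u^*_x$ in $L^2$, i.e.\ $d_{H^1}(u_j,u^*)\to0$, contradicting $d_{H^1}(u_j,u^*)\ge\delta$. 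I expect this norm-convergence step to be the main point of the argument: it is exactly where the loss of a coercive quadratic form for $\alpha\ge1$ is compensated, the $H^1$-boundedness of sublevel sets being supplied by the positivity constraint through Lemma~\ref{lem:energy-bound} rather than by the quadratic form itself.
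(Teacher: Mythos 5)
Your proof is correct and takes essentially the same route as the paper: for $\alpha<1$ the explicit bound comes from the exact expansion Eq.~(\ref{eq:Taylor}) around $u^*$, dropping the nonnegative linear term (Lemma~\ref{lem:EL}) and applying Wirtinger's inequality to the mean-zero difference, while the general case is the same compactness-and-contradiction argument built on Lemma~\ref{lem:energy-bound}, weak lower semicontinuity, and uniqueness of the minimizer from Theorem~\ref{thm:min}. The only difference is cosmetic: where you upgrade weak to strong $H^1$-convergence via convergence of the Dirichlet norms plus the Radon--Riesz property, the paper instead rearranges Eq.~(\ref{eq:Taylor}) into $d_{H^1}(u_j,u^*)^2 = 2\Delta E(u_j) - 2\int_{Z(u^*)} u_j\,(\lambda-\cos x)\, dx + \alpha^2\|u_j-u^*\|_{L^2}^2$ and lets each term tend to zero --- an equivalent mechanism, since Eq.~(\ref{eq:Taylor}) is just the energy identity written around the critical point.
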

\begin{proof} We argue by contradiction.
If the conclusion fails, then
there exists a minimizing sequence
$\{u_j\}$ such that $\inf_j d_{H^1}(u_j,u^*)>0$.
By Lemma~\ref{lem:energy-bound}, a subsequence
converges weakly in $H^1$ and strongly in $L^2$ to
the minimizer $u^*$.
But since the minimizer is unique, and all convergent subsequences
have the same limit, the entire sequence
converges. By the expansion of the energy in Eq.~(\ref{eq:Taylor}),
\begin{eqnarray*}
d_{H^1}(u_j,u^*)
&=& 
\left( 2 \Delta E(u_j) - 2 \int_{Z(u^*)} u_j\cdot (\lambda-\cos x)\, dx
+\alpha^2 \int_\Omega  (u_j-u^*)^2\, dx\right)^{\frac{1}{2}}\\
&\to& 0 \quad (j\to\infty)\,,
\end{eqnarray*}
contradicting the choice of the sequence.
For $\alpha<1$, the bound in Eq.~(\ref{eq:coerce}) follows 
immediately from the observation that the linear
term in Eq.~(\ref{eq:Taylor}) is nonnegative, see Lemma~\ref{lem:EL}.
\end{proof}

\bigskip 
For $\alpha=1$, one can take advantage of
the positivity of the linear term in 
Eq.~(\ref{eq:Taylor}) to obtain an explicit estimate 
of the form $ d_{H^1}(u, u^*) \le c_1(\Delta E)^{1/2} +c_2\Delta E$,
where the constants $c_1$ and $c_2$ depend on the mass.
We next construct a sequence of times 
along which the dissipation goes to zero.

\begin{lemma}[Construction of a weakly convergent sequence.]
\label{time-sequence} Let $u$ be a global strong solution of 
Eq.~(\ref{eq:PDE}) that satisfies 
inequalities~(\ref{eq:global-strong}) and~(\ref{eq:energy-integrated}),
and let $E_0$ be its energy at time $t=0$.
There exists a sequence of times $t_j\to\infty$
such that 
$$
\sup_j ||u_{xx}(\cdot, t_j)||_2 <\infty\,,\quad
\lim_{j \rightarrow \infty} D(u(\cdot, t_j))
= 0\,.
$$
\end{lemma}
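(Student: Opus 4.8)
The plan is to extract two pieces of information from the hypotheses and combine them by a measure-theoretic selection argument. The energy inequality~(\ref{eq:energy-integrated}) will give that the dissipation is integrable in time, so that it is small along most large times; the linear bound~(\ref{eq:global-strong}) will control the measure of the set of times on which $\|u_{xx}\|_2$ is large. Choosing thresholds so that the two ``bad'' sets cannot exhaust a shifted interval, I can then select times at which both quantities are simultaneously well-behaved.

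First I would observe that the double integral in~(\ref{eq:energy-integrated}) is exactly $\int_0^T D(u(\cdot,t))\,dt$, since $D(u)=\int_{P(u)}u^n(u_{xxx}+\alpha^2 u_x-\sin x)^2\,dx$ by~(\ref{eq:def-D}). Because $E$ is bounded below on ${\cal C}_M$ by Lemma~\ref{lem:energy-bound}, the inequality yields $\int_0^T D(u(\cdot,t))\,dt \le E_0 - \inf_{{\cal C}_M} E =: C_0$ for every $T$, whence $\int_0^\infty D\,dt \le C_0<\infty$. In particular the tail $\epsilon_T:=\int_T^\infty D(u(\cdot,t))\,dt$ tends to $0$ as $T\to\infty$.

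Next I would run the selection on the intervals $[T,2T]$. By~(\ref{eq:global-strong}), $\int_T^{2T}\|u_{xx}\|_2^2\,dt\le A+2BT$, so by Chebyshev's inequality the set of times in $[T,2T]$ where $\|u_{xx}\|_2^2>4B$ has measure at most $\frac{A}{4B}+\frac{T}{2}$; hence the complementary good set has measure at least $\frac{T}{2}-\frac{A}{4B}$, which exceeds $\frac{T}{4}$ once $T$ is large. Applying Chebyshev to the dissipation with threshold $\delta_T:=\epsilon_T^{1/2}$ (assuming $\epsilon_T>0$, as otherwise the conclusion is immediate), the set of times in $[T,2T]$ with $D>\delta_T$ has measure at most $\epsilon_T/\delta_T=\epsilon_T^{1/2}\to 0$. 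For $T$ large the good-$H^2$ set therefore cannot be contained in the bad-dissipation set, so I can pick $t_T\in[T,2T]$ with $\|u_{xx}(\cdot,t_T)\|_2\le 2\sqrt B$ and $D(u(\cdot,t_T))\le\epsilon_T^{1/2}$. Setting $T=2^j$ and $t_j:=t_{2^j}$ produces a sequence with $t_j\to\infty$, $\sup_j\|u_{xx}(\cdot,t_j)\|_2\le 2\sqrt B<\infty$, and $D(u(\cdot,t_j))\to 0$.

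The main obstacle is conceptual rather than computational: the bound~(\ref{eq:global-strong}) only controls the \emph{time average} of $\|u_{xx}\|_2^2$ and grows linearly, so one cannot expect a uniform $H^2$ bound at \emph{every} time; meanwhile the dissipation is merely integrable and hence small only along \emph{some} times. The crux is that these two families of good times must overlap for arbitrarily large $T$, and the shifting-interval Chebyshev estimate above is precisely what forces this overlap while driving the selected times to infinity.
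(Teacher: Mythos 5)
Your proof is correct and takes essentially the same approach as the paper's: both arguments convert Eq.~(\ref{eq:global-strong}) and the time-integrability of $D$ furnished by Eq.~(\ref{eq:energy-integrated}) (plus the lower bound on $E$ from Lemma~\ref{lem:energy-bound}) into Chebyshev-type measure bounds on the ``bad'' sets of times, and then pick a good time in a moving window — $[T,2T]$ for you, $[\frac{T}{2},T]$ in the paper. The only difference is cosmetic: you use the adaptive dissipation threshold $\epsilon_T^{1/2}$ coming from the vanishing tail of $\int D\,dt$, while the paper fixes thresholds $\varepsilon_j\to 0$ and then takes $T_j$ sufficiently large.
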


\begin{proof} Eq.~(\ref{eq:global-strong}) implies that the set
$$
C_1 = \Bigl\{ t\in (0,T)\Big\vert ||u_{xx} (\cdot, t)||_2^2 
\ge 4B\Bigr\}
$$
has measure bounded by 
$\mu(C_1)\le \frac{A}{4B} +\frac{T}{4}$.
Similarly,
for every $\eps>0$, Eq.~(\ref{eq:energy-integrated}) implies that
$$
C_2 = \Bigl\{ t\in (0,T)\Big\vert D(u(\cdot, t)) \ge \eps \Bigr\}
$$
has measure  bounded by $\mu(C_2)\le \frac{E_0-E(u^*)}{\eps}$.
It follows that for $T>\frac{A}{B}+\frac{4(E_0-E(u^*))}{\eps}$,
we can find $t\in [\frac{T}{2},T]$ that lies
neither in $C_1$ nor in $C_2$. The sequence 
$t_j$ is constructed by taking sequences $\eps_j\to 0$
and $T_j\to\infty$.
\end{proof}

\bigskip We combine this lemma with the stability result
from Lemma~\ref{lem:coerce}  to show that $u^*$ is 
in fact asymptotically stable.

\begin{theorem}[Asymptotic stability.]
\label{thm:conv} Let $u$ be a global strong solution of
Eq.~(\ref{eq:PDE}) of mass $M$ and initial energy
$E_0$ constructed by the method of Bernis and Friedman,
and let $u^*$ be the energy minimizer
on ${\cal C}_M$.  If $\alpha>1$, assume in addition that
the sub=level set $\{E\le E_0\}$ in ${\cal C}_M$ contains
no other steady states with zero dissipation.
Then
$$
\lim_{t\to\infty} d_{H^1}(u(\cdot, t), u^*)=0\,.
$$
\end{theorem}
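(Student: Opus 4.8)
The plan is to combine the compactness furnished by Lemma~\ref{time-sequence} with the rigidity of zero-dissipation steady states (Theorem~\ref{thm:unique}) to identify a limit along a suitable sequence of times, and then to upgrade this to convergence along the full trajectory by means of the coercivity estimate of Lemma~\ref{lem:coerce}. First I would invoke Lemma~\ref{time-sequence} to produce times $t_j\to\infty$ with $\sup_j\|u_{xx}(\cdot,t_j)\|_2<\infty$ and $D(u(\cdot,t_j))\to 0$. The uniform $H^2$-bound makes $\{u(\cdot,t_j)\}$ precompact in $H^1$ by the Rellich lemma, so after passing to a subsequence it converges weakly in $H^2$, strongly in $H^1$, and (since $H^1(\Omega)\hookrightarrow C(\Omega)$ in one dimension) uniformly, to some $v\in{\cal C}_M$.

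The crux of the argument is to show that this limit $v$ is a strong steady state with zero dissipation. Here the difficulty is that the $H^2$-bound controls neither $u_{xxx}(\cdot,t_j)$ nor the shape of the positivity sets, while the weight $u^n$ in~(\ref{eq:def-D}) degenerates at the contact points. I would localize to the positivity set of the limit: on any interval $K\subset\subset P(v)$, uniform convergence gives $u(\cdot,t_j)\ge c>0$ for large $j$, so $u^n\ge c^n$ there and $D(u(\cdot,t_j))\to 0$ forces $u_{xxx}(\cdot,t_j)+\alpha^2 u_x(\cdot,t_j)-\sin x\to 0$ in $L^2(K)$. Since $u_x(\cdot,t_j)\to v_x$, this pins down $u_{xxx}(\cdot,t_j)\to -\alpha^2 v_x+\sin x$ in $L^2(K)$; pairing against test functions and using the weak $L^2$-convergence of $u_{xx}(\cdot,t_j)\to v_{xx}$ then yields $v\in H^3(K)$ with $v_{xxx}+\alpha^2 v_x-\sin x=0$. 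As $K$ was arbitrary, $D(v)=0$ and $v\in H^3_{loc}(P(v))\cap H^2(\Omega)$. Because $v\ge 0$ lies in $H^2\subset C^1$ and vanishes on $\partial P(v)$, it attains an interior minimum there, so $v_x=0$ on $\partial P(v)$ as well, and Lemma~\ref{lem:steady} certifies that $v$ is a strong steady state with zero dissipation.

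With $v$ a zero-dissipation steady state in hand, I would identify $v=u^*$. For $\alpha\le 1$ this is immediate from Theorem~\ref{thm:unique}. For $\alpha>1$, weak lower semicontinuity of $E$ together with the energy inequality~(\ref{eq:energy-integrated}) gives $E(v)\le\liminf_j E(u(\cdot,t_j))\le E_0$, so $v$ lies in the sub-level set $\{E\le E_0\}$, which by hypothesis contains no steady state other than $u^*$. Since every subsequence produces the same limit $u^*$, the whole sequence $u(\cdot,t_j)$ converges to $u^*$ strongly in $H^1$.

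It remains to pass from the sequence $t_j$ to the full trajectory, and this is where Lemma~\ref{lem:coerce} enters. By the energy inequality~(\ref{eq:energy-integrated}) the map $t\mapsto E(u(\cdot,t))$ is non-increasing and, by Lemma~\ref{lem:energy-bound}, bounded below, so it has a limit as $t\to\infty$. Along $t_j$ the strong $H^1$-convergence $u(\cdot,t_j)\to u^*$ and the $H^1$-continuity of $E$ give $E(u(\cdot,t_j))\to E(u^*)$, hence the full limit equals $E(u^*)$ and $E(u(\cdot,t))-E(u^*)\to 0$. Lemma~\ref{lem:coerce} then converts this energy convergence into $d_{H^1}(u(\cdot,t),u^*)\to 0$, which is the assertion. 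I expect the identification of the limit in the second paragraph to be the principal obstacle, since it is there that one must extract information about the uncontrolled third derivative from the degenerate dissipation on compact subsets of the positivity set.
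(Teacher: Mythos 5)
Your proposal is correct and follows essentially the same route as the paper's proof: extract the sequence $t_j$ from Lemma~\ref{time-sequence}, pass to a weak-$H^2$/strong-$H^1$ limit $v$, use uniform convergence to localize the vanishing dissipation on the positivity set of $v$ and identify $v$ as a zero-dissipation strong steady state via Lemma~\ref{lem:steady}, invoke Theorem~\ref{thm:unique} (or the $\alpha>1$ hypothesis) to get $v=u^*$, and finally combine monotonicity of the energy with Lemma~\ref{lem:coerce}. If anything, you are slightly more careful than the paper in two spots—verifying $v_x=0$ on $\partial P(v)$ before applying Lemma~\ref{lem:steady}, and spelling out the subsequence argument—both of which the paper leaves implicit.
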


\begin{proof} Let $\{t_j\}$ be the sequence of times constructed
in Lemma \ref{time-sequence}.
Since $\{u(\cdot, t_j)\}$ is uniformly bounded in $H^2$,
there is a subsequence (again denoted $t_j$) that
converges weakly in $H^2$ and strongly in 
$H^1$ to some limit $v\in {\cal C}_M$.  We want to show that $v = u^*$. 

For $\delta>0$, consider the set 
$P_\delta(v)=\{x\in\Omega \mid v(x)>\delta\}$.
Since $u(\cdot, t_j)$ converges uniformly to $v$, we have
that $u(x,t_j)>\frac{\delta}{2}$ on 
$P_\delta(v)$ for $j$ sufficiently large, and it follows that
$$
\int_{P_\delta(v)} 
\bigl(u_{xxx}(\cdot, t_j) +\alpha^2 u_x(\cdot, t_j)-
\sin x \bigr)^2\, dx \le \frac{2}{\delta} D(u(\cdot, t_j))\to 0\,.
$$
Since we already know that $u_x(\cdot, t_j)$ converges to 
$v_x$ strongly in $L^2$, this means that
$u_{xxx}(\cdot, t_j)$ converges
to $\sin x -\alpha^2 v_x$ strongly in $L^2(P_\delta(v))$. 
The limit agrees with $v_{xxx}$, and we see that
$$
v_{xxx} +\alpha^2v_x-\sin x = 0
\quad \mbox{on} \ P_\delta(v)\,.
$$
Since $\delta$ was arbitrary and $v\in H^2$ by construction,
it follows from Lemma~\ref{lem:steady} that $v$
is a strong steady state of Eq.~(\ref{eq:PDE}).
Since $E(v)\le E(u(\cdot, 0))$ and $D(v)=0$, we conclude with
Theorem~\ref{thm:unique} that $v=u^*$.

We next observe that $E(u(\cdot, t_j))\to  E(u^*)$ by the
continuity of the energy in $H^1$. Since the energy decreases
monotonically along solutions by 
Eq.~(\ref{eq:energy-integrated}), we have
$$
\lim_{t\to\infty} E(u(\cdot, t))= E(u^*)\,,
$$
and the claimed convergence follows with Lemma~\ref{lem:coerce}.
\end{proof}

\section{Rate of convergence}
\label{sec:rate}

In the final section, we consider the rate of convergence
to steady states. We will show that strictly
positive energy minimizers are exponentially attractive, while
steady states that have zeroes can be approached at most at a polynomial rate.
The reason is that the entropy inequality in
Eq.~(\ref{eq:entropy-integrated}) limits the rate at which
the solution can converge to zero on a subset of $\Omega$.
To obtain the strongest lower bound, we will use
Kadanoff's entropy
\begin{equation}
\label{eq:S-Kadanoff}
S(u) = \int_{\Omega} u^{-n+\frac{3}{2}}\, dx\,.
\end{equation}
One can verify by direct calculation (involving repeated integration 
by parts~\cite{B2,JM}) that classical positive solutions of the 
thin-film equation Eq.~(\ref{eq:basic}) with $n\ne \frac{3}{2}$ satisfy
\begin{equation} \label{eq:Kadanoff}
c_n^{-1} \frac{dS(u)}{dt} 
= \int_\Omega u^{-\frac{1}{2}} u_x u_{xxx}\, dx
= -4 \int_\Omega u^{\frac{1}{2}}
\Bigl( (u^{\frac{1}{2}})_{xx}\Bigr)^2\, dx < 0\,,
\end{equation}
where $c_n = \bigl(n-\frac{3}{2}\bigr)\bigl(n-\frac{1}{2}\bigr)$.
This is a special case of Eq. (2.13) in~\cite{B2}. 
For Eq.~(\ref{eq:PDE}), Kadanoff's
entropy can grow at most linearly with time:

\begin{lemma} [Entropy bound.]
\label{lem:entropy} Fix $n>\frac{3}{2}$,
and let $S$ be given by Eq.~(\ref{eq:S-Kadanoff}).
Let $u_0\in {\cal C}_M$  be an initial value
of finite energy and entropy.
Then the global strong solution of Eq.~(\ref{eq:PDE})
constructed by the method of Bernis and Friedman satisfies
\begin{equation}\label{eq:entropy}
S(u(\cdot, t)) \le S(u(\cdot,0)) + K_0t\,,
\end{equation}
where $K_0$ depends on the mass and the initial energy.
\end{lemma}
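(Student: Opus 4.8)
The plan is to establish the linear-in-time growth bound by differentiating Kadanoff's entropy along solutions and controlling the sign-indefinite terms that arise from the gravitational and long-wave-instability terms in Eq.~(\ref{eq:PDE}). The starting point is the entropy dissipation identity. For the pure thin-film equation Eq.~(\ref{eq:basic}), the calculation in Eq.~(\ref{eq:Kadanoff}) shows that $c_n^{-1}\frac{dS}{dt}$ equals the manifestly nonpositive quantity $-4\int_\Omega u^{1/2}\bigl((u^{1/2})_{xx}\bigr)^2\,dx$. The extra terms $\alpha^2 u_x$ and $-\sin x$ inside the flux of Eq.~(\ref{eq:PDE}) contribute additional terms to $\frac{dS}{dt}$. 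First I would recompute the time derivative for the full equation, obtaining schematically
$$
c_n^{-1}\frac{dS(u)}{dt}
= -4\int_\Omega u^{\frac{1}{2}}\bigl((u^{\frac{1}{2}})_{xx}\bigr)^2\,dx
+\int_\Omega u^{-\frac{1}{2}}u_x\bigl(\alpha^2 u_x-\sin x\bigr)\,dx\,,
$$
where the first term is the good dissipation and the second collects the contributions from the long-wave and gravity terms.

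The heart of the argument is to show that the sign-indefinite integral is bounded above by a constant depending only on the mass and the initial energy. I would integrate the piece $\int_\Omega u^{-1/2}u_x\cdot(-\sin x)\,dx$ by parts to remove the derivative on $u$, and similarly rewrite the $\alpha^2\int_\Omega u^{-1/2}u_x^2\,dx$ term in terms of $\int_\Omega u^{1/2}$-type quantities controlled by the $H^1$-norm of $u^{1/2}$. The key point is that the energy inequality Eq.~(\ref{eq:energy-integrated}) together with the lower bound on $E$ from Lemma~\ref{lem:energy-bound} keeps $\|u_x\|_{L^2}$ bounded in terms of $E_0$ and $M$ for all time, so these terms are uniformly bounded. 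In the worst case one may need to absorb a small multiple of the good dissipation term using a Cauchy--Schwarz or weighted Young inequality, leaving a constant $K_0=K_0(M,E_0)$ on the right. Dropping the (nonpositive) remaining dissipation then yields the differential inequality $\frac{dS}{dt}\le K_0$, which integrates to Eq.~(\ref{eq:entropy}).

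A separate but essential step is justification: the identity Eq.~(\ref{eq:Kadanoff}) is stated for classical positive solutions, whereas the lemma concerns the Bernis--Friedman strong solution, which may vanish and need not be smooth. I would carry out the entire computation at the level of the regularized equation with mobility $f_\eps(u)=u^n+\eps$, whose solutions are smooth and strictly positive, establish the bound Eq.~(\ref{eq:entropy}) with constants $c_n$ and $K_0$ independent of $\eps$ (exactly as the excerpt already does for the simpler entropy in Eq.~(\ref{eq:entropy-integrated})), and then pass to the limit $\eps\to 0$. Lower semicontinuity of $S$ under the $L^2$-convergence $u^{\eps_j}\to u$, together with Fatou's lemma, preserves the inequality in the limit, and the finiteness of $S(u(\cdot,0))$ is exactly the hypothesis on the initial data.

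I expect the main obstacle to be controlling the cross term $\int_\Omega u^{-1/2}u_x\bigl(\alpha^2 u_x-\sin x\bigr)\,dx$ uniformly in time without appealing to the (unavailable) pointwise positivity of $u$: the negative power $u^{-1/2}$ interacts badly with the degeneracy, so the integration by parts and the absorption of the singular factor into the good dissipation term must be done carefully, making sure every constant depends only on $M$ and $E_0$ and not on how small $u$ becomes. This is precisely the place where the long-wave instability and gravity terms, which destroy monotonicity of the entropy, enter, and where the linear (rather than bounded) growth in $t$ becomes unavoidable.
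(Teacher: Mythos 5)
Your proposal is correct and follows essentially the same route as the paper's proof: differentiate Kadanoff's entropy along positive solutions of the regularized equation, integrate the gravitational term by parts, absorb the long-wave term into the dissipation $-4\int_\Omega u^{1/2}\bigl((u^{1/2})_{xx}\bigr)^2\,dx$ via integration by parts plus a weighted Young inequality (the paper does this by completing the square, leaving the remainder $\frac{\alpha^4}{4}\int_\Omega u^{3/2}\,dx$), bound the remainders by mass and energy via Lemma~\ref{lem:energy-bound}, and pass to the limit through the Bernis--Friedman regularization. One clarification: the absorption step is not a ``worst case'' fallback but the essential point, since $\alpha^2\int_\Omega u^{-1/2}u_x^2\,dx = 4\alpha^2\int_\Omega \bigl((u^{1/2})_x\bigr)^2\,dx$ is \emph{not} controlled by $M$, $E_0$, or $\|u_x\|_{L^2}$ alone when $u$ becomes small, so your primary claim that the energy inequality makes these terms uniformly bounded would fail without it.
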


\begin{proof} If $u$ is a positive classical solution
of Eq.~(\ref{eq:PDE}), we differentiate the entropy
and integrate by parts to obtain
$$
c_n^{-1} \frac{dS(u)}{dt} = \int_\Omega u^{-\frac{1}{2}} u_x
u_{xxx}\, dx + \alpha^2 \int_\Omega
u^{-\frac{1}{2}} u_x^2\,dx - \int_\Omega
u^{-\frac{1}{2}}u_x\sin x\, dx\,,
$$
where $c_n= \bigl(n\!-\!\frac{3}{2}\bigr)\bigl(n\!-\!\frac{1}{2}\bigr)$. 
The first summand we rewrite with the help of  Eq.~(\ref{eq:Kadanoff}).
The second summand we integrate by parts
$$
\int_\Omega  u^{-\frac{1}{2}} u_x^2\, dx
= 2 \int_\Omega u (u^{\frac{1}{2}})_{xx}\, dx\,,
$$
and combine it with the first by completing the square.
This produces a remainder term of the 
form $\frac{\alpha^4}{4}\int_\Omega u^{\frac{3}{2}}\, dx$.
The third summand we integrate by parts as well.  We arrive at
$$
c_n^{-1}\frac{dS(u)}{dt} = - \int_\Omega
u^{\frac{1}{2}} \left( 2( u^{\frac{1}{2}})_{xx}-
\frac{\alpha^2}{2} u^{\frac{1}{2}}\right)^2\,dx +
\frac{\alpha^4}{4} \int_\Omega u^{\frac{3}{2}}\,dx
+2\int_\Omega u^{\frac{1}{2}}\cos x\, dx\,.
$$
By Lemma~\ref{lem:energy-bound}, the last two integrals
are bounded by a constant that depends only 
on the mass and the energy. Integrating along the solution,
we see that Eq.~(\ref{eq:entropy}) holds for classical solutions. 

By the same computation, Eq.~(\ref{eq:entropy}) holds for
the solutions of a suitably regularized equation 
with the correspondingly regularized entropy 
and with the same constant $K_0$.  
Since the strong solution is a uniform limit 
of such solutions, the entropy converges as well, and
the claim follows.
\end{proof}

\begin{theorem}[Bounds on the rate of convergence.]
\label{thm:decay} Consider Eq.~(\ref{eq:PDE}) with 
parameters $n>0$ and $\alpha>0$, and
set $\beta=n-\frac{3}{2}$. Let $u$ be a solution of mass $M$ that
satisfies the energy and entropy inequalities 
in~(\ref{eq:energy-integrated}) and (\ref{eq:entropy}).
Assume that $u$ converges  in $H^1$ to the 
energy-minimizing steady state $u^*$ of mass $M$.

\begin{itemize}
\item If $n>\frac{3}{2}$ and $u^*$ vanishes on a set of positive length $L$,
then
$$
d_{H^1}(u(\cdot, t), u^*) \ge 
\frac{1}{\sqrt{\pi}}\cdot \Bigl(\frac{L}{S_0+K_0t}\Bigr)^{\frac{1}{\beta}}\,;
$$
\item
if $n>2$ and  $u^*$ vanishes quadratically at a point, then there
exist positive constants $K_1$ and $K_2$ (depending on the
initial energy and entropy) such that
$$
d_{H^1}( u(\cdot,t),u^*)\ge
\bigl(K_1+K_2t\bigr)^{-\frac{2}{2\beta-1}}\,;
$$
\item if $\alpha<1$ and $u^*$ is strictly positive, then
$$
d_{H^1}(u(\cdot, t),u^*)\le K_3 e^{-\mu t}\,
$$
for some  constant $K_3$ (depending on $u$), 
where $\mu = (1-\alpha^2)(\min u^*)^n$.

\end{itemize}
\end{theorem}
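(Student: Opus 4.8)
The three parts share one engine but pull in opposite directions: the two lower bounds run on the linear entropy growth of Lemma~\ref{lem:entropy}, while the exponential upper bound treats the energy as a Lyapunov functional and uses the coercivity of Lemma~\ref{lem:coerce}. Throughout I would set $w=u-u^*$, which has mean zero since $u$ and $u^*$ share the mass $M$, so $d_{H^1}(u,u^*)=\|w_x\|_{L^2}$. For the first bullet, let $Z$ be the (closed) set where $u^*$ vanishes, so $|Z|=L$ and $w=u$ on $Z$. Because $y\mapsto y^{-\beta}$ is decreasing, $\int_Z u^{-\beta}\,dx\ge L\,(\sup_Z u)^{-\beta}$, and comparing with $S(u)\le S_0+K_0t$ gives $\sup_Z u\ge (L/(S_0+K_0t))^{1/\beta}$. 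If $x_0\in Z$ attains this supremum then $w(x_0)=\sup_Z u$; since $w$ is continuous and mean zero it also has a zero $x_1$, and integrating $w_x$ along the shorter arc (of length at most $\pi$) between $x_1$ and $x_0$ yields $\sup_Z u=|w(x_0)-w(x_1)|\le\sqrt{\pi}\,\|w_x\|_{L^2}$. This is exactly the claimed bound, the constant $1/\sqrt{\pi}$ coming from the half-circumference of $\Omega$.

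For the second bullet the plan is to show that a quadratic zero forces the entropy to be large in terms of $\delta:=d_{H^1}(u,u^*)$. Writing $\tau$ for the zero, the regularity $u^*\in{\cal C}^{1,1}$ with $u^*(\tau)=u^*_x(\tau)=0$ gives $u^*(x)\le c(x-\tau)^2$ near $\tau$, while the one-dimensional Sobolev embedding bounds $\|w\|_{L^\infty}$ by a multiple of $\|w_x\|_{L^2}=\delta$; hence $u(x)\le c(x-\tau)^2+C\delta$ on a fixed neighbourhood, and $\int u^{-\beta}\,dx\ge\int(c(x-\tau)^2+C\delta)^{-\beta}\,dx$. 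The substitution $x-\tau=\sqrt{C\delta/c}\,\sigma$ turns the right-hand side into $C'\delta^{\frac12-\beta}\int(\sigma^2+1)^{-\beta}\,d\sigma$, and the last integral converges precisely because $\beta>\frac12$ (that is, $n>2$). Thus $S(u)\gtrsim\delta^{-(2\beta-1)/2}$, and feeding this into $S(u)\le S_0+K_0t$ and solving for $\delta$ produces the power law with exponent $-2/(2\beta-1)$.

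For the third bullet $u^*$ is strictly positive, so $Z(u^*)=\emptyset$ and the Taylor identity~(\ref{eq:Taylor}) collapses to $E(u)-E(u^*)=\frac12\int_\Omega(w_x^2-\alpha^2w^2)\,dx$, which for $\alpha<1$ is squeezed between $\frac{1-\alpha^2}{2}\delta^2$ and $\frac12\delta^2$ by Wirtinger. For the dissipation I would write $u_{xxx}+\alpha^2u_x-\sin x=g_x$ with $g=u_{xx}+\alpha^2u+\cos x$; since $g$ is constant at $u^*$, one has $g_x=w_{xxx}+\alpha^2w_x$ and, by Parseval, $\|g_x\|_{L^2}^2=2\pi\sum_{p\ne0}p^2(p^2-\alpha^2)^2|\hat w(p)|^2$. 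Comparing the two sums mode by mode, the quantity $p^2(p^2-\alpha^2)$ is minimized over $|p|\ge1$ at $p^2=1$, where it equals $1-\alpha^2$, giving the spectral gap $\|g_x\|_{L^2}^2\ge2(1-\alpha^2)(E-E(u^*))$. Since $u$ stays close to the strictly positive $u^*$, we have $\min u>0$ and $D(u)\ge(\min u)^n\|g_x\|_{L^2}^2\ge2(1-\alpha^2)(\min u)^n(E-E(u^*))$. The energy inequality then yields $\frac{d}{dt}(E-E(u^*))\le-2(1-\alpha^2)(\min u)^n(E-E(u^*))$, and Gronwall gives exponential decay of $E-E(u^*)$, hence of $\delta$ by the lower squeeze above.

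I expect the main obstacle to be recovering the sharp constant $\mu=(1-\alpha^2)(\min u^*)^n$ rather than a rate built from the running quantity $\min u(\cdot,t)$. Since $H^1$ convergence only gives $\min u(\cdot,t)\to\min u^*$ in the limit, a direct Gronwall produces a rate $\mu(1-o(1))$. I would close this gap by bootstrapping: any positive exponential rate already makes $\int_0^\infty[(\min u^*)^n-(\min u(\cdot,s))^n]\,ds$ finite, after which integrating the sharp differential inequality returns the clean constant $\mu$, with $K_3$ absorbing the finite correction. A secondary technical point, shared by all three parts, is that strong solutions obey the energy and entropy relations only as inequalities; I would use them in their integrated forms and pass to the almost-everywhere differential inequality for the monotone energy.
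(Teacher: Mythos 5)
Your proposal is correct and follows the paper's strategy in all three parts: the two lower bounds run on the linear entropy growth of Lemma~\ref{lem:entropy} combined with the inequality $\|u-u^*\|_{L^\infty}\le\sqrt{\pi}\,d_{H^1}(u,u^*)$ (which the paper invokes from equality of masses and you prove by the mean-zero/Cauchy--Schwarz argument), and the exponential upper bound uses the same Fourier-multiplier estimate $p^2(p^2-\alpha^2)^2\ge(1-\alpha^2)(p^2-\alpha^2)$, Gronwall, and the coercivity bound~(\ref{eq:coerce}). Two differences are worth noting. For the quadratic-zero case the paper works with suprema: it bounds $\|u-u^*\|_{L^\infty}\ge \sup_I u-\sup_I u^*\ge \bigl(L/(S_0+K_0t)\bigr)^{1/\beta}-O(L^{2})$ on an interval of length $L$ and then optimizes $L\sim (S_0+K_0t)^{-1/(2\beta-1)}$; you instead integrate the pointwise bound $u\le c(x-\tau)^2+C\delta$ to get $S(u)\ge C'\delta^{(1-2\beta)/2}$ and invert. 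The exponent is the same, but your route avoids the optimization step and makes transparent why $n>2$ (i.e.\ $\beta>\tfrac12$) is the threshold: it is exactly the condition under which the entropy blows up as $\delta\to 0$. In the third part you correctly identify a point the paper passes over silently: the differential inequality carries the factor $(\min u(\cdot,t))^n$, and uniform convergence alone only yields a rate $\mu(1-o(1))$, not the clean constant $\mu=(1-\alpha^2)(\min u^*)^n$; your bootstrap (establish some exponential rate, deduce $\int_0^\infty[(\min u^*)^n-(\min u(\cdot,s))^n]\,ds<\infty$, then rerun Gronwall with the sharp constant absorbed into $K_3$) is the right repair, so on this point your argument is more complete than the paper's.
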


\begin{proof} 
If $Z(u^*)$ has measure $L>0$,  we estimate
$$
S(u(\cdot, t)) \ge L
\Bigl(\ \sup_{x\in Z(u^*)} u(x,t)\Bigr)^{-\beta}
\ge L\cdot ||u(\cdot, t)-u^*||_{L^\infty}^{-\beta}\,.
$$
Since $u$ and $u^*$ have the same mass,
we have $||u(\cdot, t)-u^*||_{L^\infty}\le  \sqrt{\pi}\,
d_{H^1}(u(\cdot,t),u^*)$, 
and the first claim follows from the bound on the
entropy in Eq.~(\ref{eq:entropy}). 

If $u^*$ has a zero of order $\gamma>\frac{1}{\beta}$, we consider
the interval $I$ of length $L$ centered at that point and obtain with
the same calculation as for the first case that
$$
||u(\cdot, t)-u^*||_{L^\infty} 
\ge   \sup_{x\in I} u(x, t)- \sup_{x\in I} u^*(x) 
\ge  \Bigl(\frac{L}{S_0 + K_0t}\Bigr)^\frac{1}{\beta}
- O(L^{\gamma})
$$
as $L\to 0$.
Choosing $L=\eps\cdot(S_0+K_0t)^{-\frac{1}{\beta\gamma-1}}$, we see that
for $\eps>0$ sufficiently small
$$
||u(\cdot, t)-u^*||_{L^\infty} \ge
(K_1+K_2t)^{-\frac{\gamma}{\beta\gamma-1}}
$$
with some constants $K_1, K_2>0$. Setting $\gamma=2$ 
and adjusting the constants we obtain the second claim.

If $u^*$ is strictly positive, then $\alpha<1$ by Theorem~\ref{thm:min}. 
By Eq.~(\ref{eq:Taylor}),
\begin{eqnarray*}
E(u(\cdot,t))-E(u^*)&=&\frac{1}{2} \int_\Omega 
\bigl( (u_x-u^*_x)^2-\alpha^2 (u-u^*)\bigr)^2 \, dx\\
& =&
\pi \sum_{p\in\ZZ\setminus{0}} (p^2-\alpha^2)|\hat u(p)-\hat{u^*}(p)|^2\,.
\end{eqnarray*}
Since $u$ converges to $u^*$ in  $H^1$, 
there exists a time $t_0$ such 
that $\min u(\cdot, t)>0$ for all $t> t_0$.
At all later times, $u$ is a
strictly positive, classical solution that can be differentiated as
often as necessary.  Since $u^*$ solves the Euler-Lagrange 
equation~(\ref{eq:EL}), the dissipation satisfies
\begin{eqnarray*}
\frac{d}{dt} E(u(\cdot, t) ) &=&  -\int_\Omega u^n 
\bigl((u-u^*)_{xxx}+\alpha^2 (u-u^*)_x \bigr)^2\, dx\\
&\le& - (\min u)^n \cdot \int_\Omega \left [
\partial_x((u-u^*)_{xx}+\alpha^2 (u-u^*))\right]^2 
\,dx\\
&=& 
- (\min u)^n \cdot \pi \sum_{p\in\ZZ\setminus\{0\}} 
p^2(p^2-\alpha^2)^2 |\hat u(p)-\hat{u^*}(p)|^2\\
&\le& - \Bigl(\frac{\min u \ }{\min u^*}\Bigr)^n 
\cdot 2\mu \, \bigl( E(u(\cdot, t))-E(u^*)\bigr)\,.
\end{eqnarray*}
In the last two steps, we have used Parseval's identity to rewrite the
integral  in terms of the Fourier coefficients of $u-u^*$, and
estimated the Fourier multipliers by
$$
p^2(p^2-\alpha^2)^2\ge (1-\alpha^2)(p^2-\alpha^2)\,,\quad (p\ne
0)\,.
$$
Since $u(\cdot, t)$ converges uniformly to $u^*$  as $t\to\infty$
by Theorem~\ref{thm:conv},
it follows from Gronwall's lemma that 
$E(u(\cdot, t))-E(u^*)\le Ke^{-2\mu t}$ for some constant $K$.
By Eq.~(\ref{eq:coerce}) of Lemma~\ref{lem:coerce},
this implies the claimed exponential convergence of 
$d_{H^1}(u(\cdot, t),u^*)$.
\end{proof}

\bigskip If $n=\frac{3}{2}$ and $u^*$ vanishes on a set of positive
length, we obtain as in the proof of the
first case of the theorem yields
an exponential bound of the
form $d_{H^1}(u(\cdot, t),u^*)\ge  K e^{-\mu t}$,
where $K$ and $\mu$ depend on the mass, energy, and entropy of the solution.

\begin{figure}[t]
\label{fig:converge}
\begin{center}
\includegraphics[height=5.5cm] {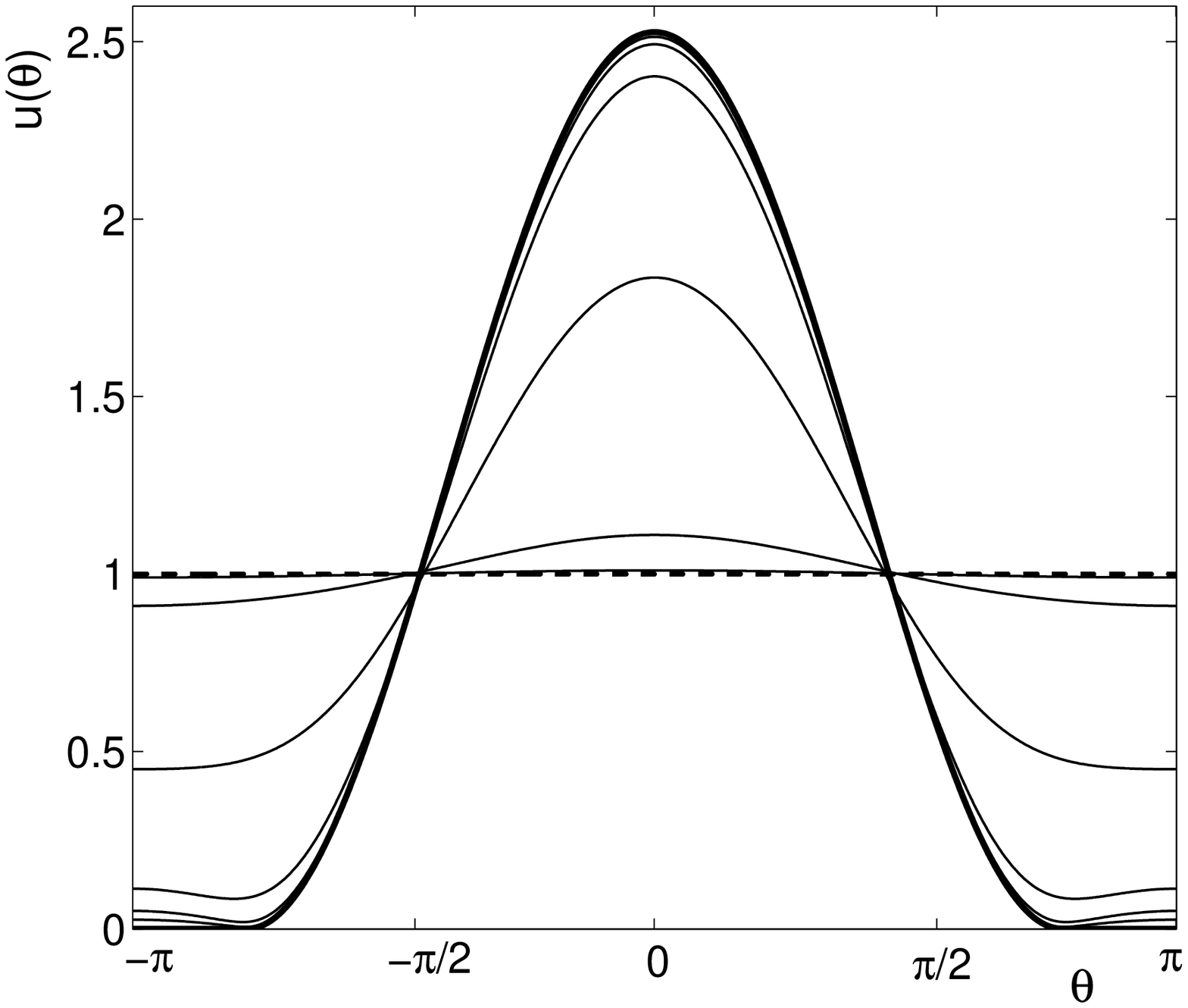}
\quad\quad \quad\quad
\includegraphics[height=5.5cm] {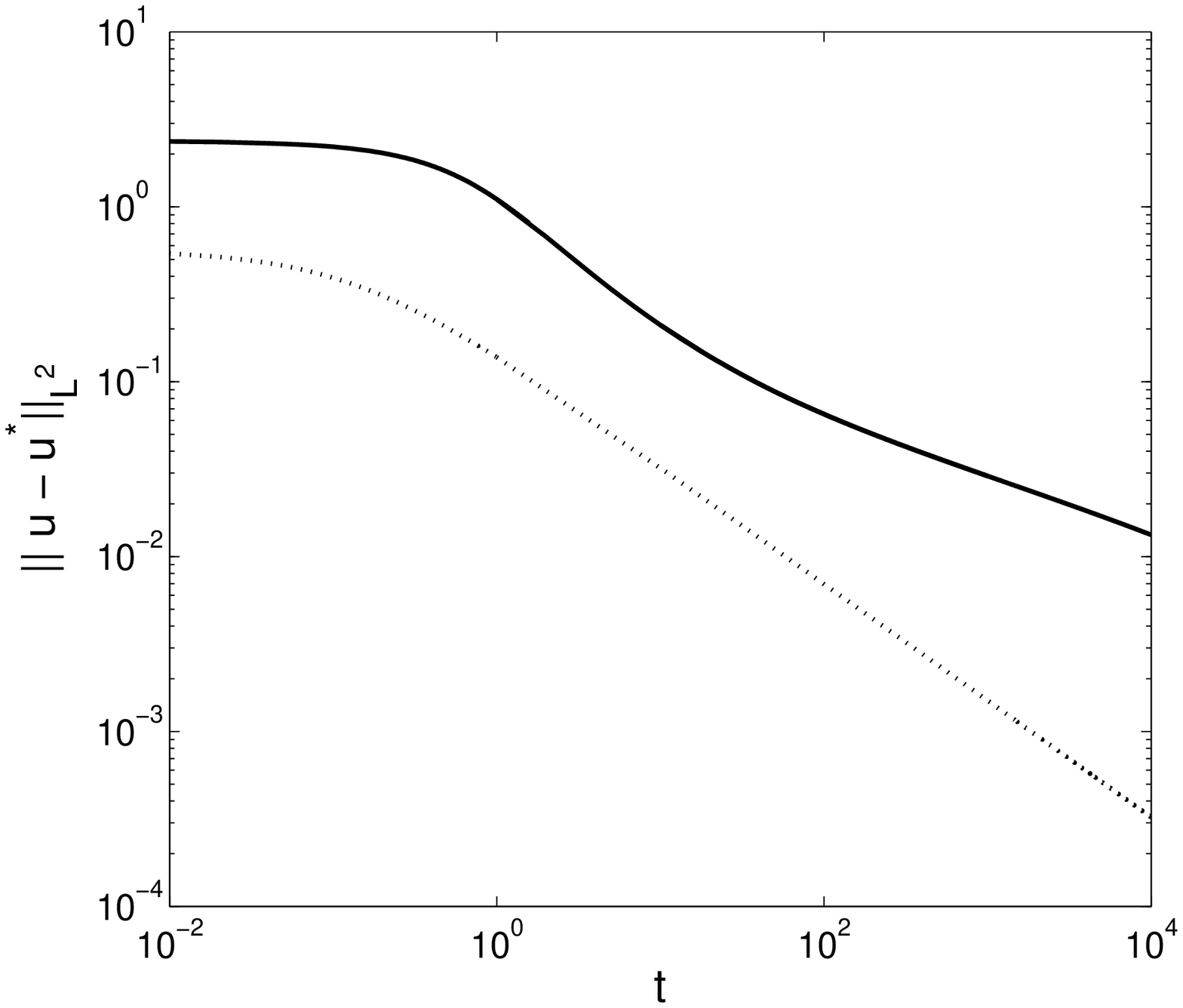}
\end{center}
\caption{\small Evolution of a solution with $\alpha=1$, $n=3$ and
initial data $u_0=1$. Time shots of the numerical
solution at $t = 0, 10^{-2}, 10^{-1}, 1, 10, 10^2, 10^3$
({\em left}). 
$L^2$-distance of the solution from the energy minimizer. The
dashed line shows the lower bound from Eq.~(\ref{eq:Pukh-rate}) ({\em right}).}
\end{figure}

\bigskip 
{\bf Summary (Pukhnachev's model on a stationary cylinder.)}\ 
{\em 
Let $u$ be a global strong solution of
$$
 u_t + \partial_x\left[u^3\,
\partial_x (u_{xx} + u+\cos x)\right]
= 0\,, \quad x \in \RR/(2\pi\ZZ)\,
$$
constructed by the method of Bernis and Friedman,
and let $u^*$ be the unique nonnegative energy minimizer of the
same mass $M$.
Then
$$
\lim_{t\to\infty} d_{H^1}(u(\cdot,t),u^*)=0\,.
$$
The minimizer is a droplet with zero contact angles and profile
$$
u^*(x) = -\frac{1}{2}\bigl(x\sin x - \tau\sin\tau)
+ \frac{1}{2} (1-\tau\cot\tau)(\cos\tau-\cos x)\,,
\quad |x|<\tau\,.
$$
The contact point $\tau$ is a continuous, strictly increasing
function of the mass, with $\tau\to 0$ as $M\to 0$
and $\tau\to\pi$ as $M\to\infty$. If, additionally,
$$
\int_\Omega \bigl(u(x, t)\bigr)^{-\frac{3}{2}}\,dx \le S_0 + K_0 t \,,
$$
then
\begin{equation}
\label{eq:Pukh-rate}
d_{H^1}(u(\cdot, t), u^*) \ge \frac{1}{\sqrt{\pi}}\cdot\Bigl(
\frac{2(\pi\!-\!\tau)}{S_0+K_0t}\Bigr)^{\frac{2}{3}}\,.
\end{equation}
}

\end{document}